\renewcommand{\epsilon}{\varepsilon}
\renewcommand{\mod}{\;\mathrm{mod}\;}
\newcommand{\ring}{\mathfrak{o}}
\newcommand{\C}{\mathbb{C}}
\newcommand{\R}{\mathbb{R}}
\newcommand{\Z}{\mathbb{Z}}
\newcommand{\Q}{\mathbb{Q}}
\newcommand{\B}{\mathbb{B}}
\newcommand{\h}{\mathfrak{H}}
\newcommand{\tr}{\mathrm{tr}}
\newcommand{\res}{\mathrm{res}}
\newcommand{\e}{\mathbf{e}}
\newcommand{\Aa}{\mathcal{A}}
\newcommand{\Bb}{\mathcal{B}}
\newcommand{\Cc}{\mathcal{C}}
\newcommand{\D}{\mathfrak{d}}
\newcommand{\bbb}{\mathfrak{b}}
\newcommand{\ccc}{\mathfrak{c}}
\newcommand{\aaa}{\mathfrak{a}}
\newcommand{\zzz}{\mathfrak{z}}
\newcommand{\Pp}{\mathfrak{P}}
\newcommand{\slz}{\mathrm{SL}_2(\Z)}
\newcommand{\mpz}{\mathrm{Mp}_2(\Z)}
\newcommand{\q}{\mathrm{q}}
\newcommand{\x}{\mathrm{x}}
\newcommand{\y}{\mathrm{y}}
\newcommand{\rt}{\varrho}
\newcommand{\pp}{\pi}
\newcommand{\qq}{\varpi}
\newtheorem{theorem}{Theorem}
\newtheorem{lemma}{Lemma}
\theoremstyle{remark}
\newtheorem{remark}{Remark}
\author{Maryna S. Viazovska}
\begin{document}
\title{Petersson inner products of weight one modular forms}
\maketitle 
\maketitle

\begin{abstract}
In this paper we study regularized Petersson products between a holomorphic theta series associated to a positive definite binary quadratic form and a weakly holomorphic weight 1 modular form with integral Fourier coefficients. In our recent work \cite{Via CM Green} motivated by the conjecture of B. Gross and D. Zagier  on the CM values of higher Green's functions we have discovered that such a Petersson product is equal to the logarithm of a certain algebraic number lying in a ring class field associated to the binary quadratic form. A similar result was obtained independently by W. Duke and Y. Li \cite{Duke}, whose interest in this problem arose from the theory of mock modular forms. The methods of the proof used in \cite{Via CM Green} and \cite{Duke} are quite different. Moreover,
  the approach here gives an explicit factorization formula for the  algebraic number obtained (except for the valuation at the ramified prime).

\end{abstract}

\section{Introduction}\label{sec: intro}
In this paper we study arithmetic properties of the regularized Petersson product between the following two modular forms of weight one: a holomorphic binary theta series  and a weakly holomorphic modular form with integral Fourier coefficients.

More precisely, consider an imaginary quadratic field $K:=\Q(\sqrt{-D})$. For simplicity we assume that $D$ is a prime congruent to $3$ modulo $4$. Let $\Bb$ be an element of the ideal class group $\mathrm{CL}_K$ of $K$. Denote by  $r_\Bb(t)$ the number of integral ideals of norm $t$ in the ideal class $\Bb$. We consider the vector space $\C^{\Z/D\Z}$ with the canonical basis $\{e_\nu|\,\nu\in\Z/D\Z\}$. The theta function \begin{equation}\label{eqn: theta}\Theta_\Bb(\tau)=\sum_{\nu\in\Z/D\Z}e_\nu\!\!\!\!\sum_{t\equiv\nu^2\;(\mathrm{mod}\;D)}
(1+\delta_{0,\nu})\,r_\Bb(t)\,\e\Big(\frac tD\tau\Big)\end{equation} is a holomorphic vector valued modular form of weight 1 and a representation $\rho$ defined in Section \ref{sec: theta}. Here we use the standard notation $\e(x):=e^{2\pi i x}$.

First, consider the classical Petersson inner product between $\Theta_\Bb$ and the cusp form $g_\chi:=\sum_{\Cc\in\mathrm{CL}_K}\chi(\Cc)\Theta_\Cc$ associated to a character $\chi:\mathrm{CL}_K\to\C^\times$. Recall that this product is defined as
 $$(g_\chi,\Theta_\Bb):=\int\limits_{\slz\backslash\h}\langle g_\chi(\tau),\overline{\Theta_\Bb(\tau)}\rangle\,y^{-1}\,dx\,dy, $$
 where $\h=\{\tau\in\C\,|\;\Im(\tau)>0\}$, $\tau=x+iy$, and $\langle e_\mu,e_\nu \rangle=\delta_{\mu,\nu}$.
Applying the Rankin-Selberg method to this integral one can see that the Peterson product can be expressed in terms of Artin $L$-function as \begin{equation}\label{eqn: kronecker}(g_\chi,\Theta_\Bb)=\frac 1h \chi(\Bb^{-1})\,L(\chi_D,1)\,\mathrm{res}_{s=1}L_K(\chi^2,s),\end{equation}
where $h$ is the class number of $K$ and $\chi_D(\cdot)=\big(\frac\cdot D\big).$ Thus, Stark's theorem implies \begin{equation}\label{eqn: unit}(g_\chi,\Theta_\Bb)=\sum_{\Cc\in\mathrm{CL(K)}}\chi^2(\Cc)\log|\epsilon_\Cc|\end{equation} for certain units $\epsilon_\Cc$  in the Hilbert class field of $K$.

   The theory of regularized theta lifts developed by Borcherds, Bruinier, Kudla and others  motivates us to generalize the classical identity \eqref{eqn: unit} by replacing the cusp form $g_\chi$ with a weakly holomorphic modular form. More precisely,  we call $f$ a weakly holomorphic cusp form if it is a weakly holomorphic (vector valued) modular form and has zero constant term. We denote by $S_1^{\,!}(\rho)$ the space of weakly holomorphic cusp forms of weight 1 and representation $\rho$. For $f\in S_1^{\,!}(\rho)$ we define a regularized Petersson product by \begin{equation}\label{eqn: pet regulrized}(f,\Theta_\Bb)^{\mathrm{reg}}:=\lim_{T\to\infty} \int\limits_{\mathcal{F}_T}\langle f(\tau),\overline{\Theta_\Bb(\tau)}\rangle\,y^{-1}\,dx\,dy, \end{equation}
where $$\mathcal{F}_T:=\big\{\tau\in\h\big| -1/2<\Re(\tau)<1/2,\;|\tau|>1,\;\mbox{and}\;\Im(\tau)<T\big\} $$ is the
 truncated fundamental domain of $\slz$. We are interested in the arithmetic properties of the number \eqref{eqn: pet regulrized} when $f$ has integral Fourier coefficients.
In our recent work \cite{Via CM Green} inspired by the conjecture of B. Gross and D. Zagier  on the CM values of higher Green's functions we have found that \begin{equation}\label{eqn: alpha}(f,\Theta_\Bb)^{\mathrm{reg}} =\log|\alpha|\end{equation} for some algebraic number $\alpha$ lying in an abelian extension of $K$. A similar result was obtained independently by W. Duke and Y. Li \cite{Duke}, however their interest in this problem arose from the theory of mock modular forms. The relation between regularized Petersson products and mock modular forms is also studied in \cite{Ehlen}. The main result of the present paper is an explicit factorization formula for the  algebraic number $\alpha$ in \eqref{eqn: alpha} (except for the valuation at ramified primes).

 Before we can state the factorization formula we need to introduce some notations. Recall that $K$ denotes the imaginary quadratic field $\Q\sqrt{-D}$, $h$ be the class number and $H$ be the Hilbert class field of $K$. 
  For a rational prime $p$ with $\big(\frac{p}{D}\big)=-1$ let $\mathcal{P}_p=\{\Pp_i\}_{i=0}^h$ be the set of prime ideals of $H$ lying above $p$. After fixing an embedding $\mathcal{\jmath}:H\to\C$ complex conjugation acts on $H$ and also on the set $\mathcal{P}_p$. Since the class number $h$ is odd, there exists a unique prime ideal in $\mathcal{P}_p$, say $\Pp_1$, with $\Pp_1=\overline{\Pp_1}$. For a prime ideal $\Pp\in\mathcal{P}_p$ there exists a unique element $\sigma\in\mathrm{Gal}(H/K)$ such that \begin{equation}\label{eqn: wp A}\Pp^\sigma=\Pp_1.\end{equation} Denote by $\Aa=\Aa(\Pp)$ the ideal class of  $K$ that corresponds to $\sigma$ under the Artin isomorphism.
\begin{theorem}\label{thm: prime my} Let $\Bb$ be an ideal class of $K$ and let $f\in S_1^{\,!}(\rho)$ be a weakly holomorphic modular form with the Fourier expansion $$f=\sum_{\nu\in \Z/D\Z}e_\nu\,\sum_{\substack{t\in\Z\\t\gg-\infty}} c_\nu(t)\,\e\Big(\frac t D\tau\Big) $$ satisfying $c_0(0)=0$ and $c_\nu(m)\in\Z$ for all $\nu\in \Z/D\Z$. Then there exists an algebraic number $\alpha\in H$ such that $$(f,\Theta_\Bb)^{\mathrm{reg}}=\log|\alpha|. $$
 Moreover, for a rational prime $p$ and a prime $\Pp$ lying above $p$ in the Hilbert class field $H$ we have\\
\begin{equation}\label{eqn: conj3}\mathrm{ord}_{\Pp} (\alpha)=\sum_{t<0}\sum_{\nu\in \Z/D\Z}c_\nu(t)\,r_{\Bb\Aa^2}\Big(\frac{-t}{p}\Big)\,\mathrm{ord}_p(t)\quad\mbox{in the case }\left(\frac{p}{D}\right)=-1,\end{equation}
\begin{equation}\label{eqn: conj3+1}\mathrm{ord}_{\Pp} (\alpha)=0\quad\mbox{in the case }\left(\frac{p}{D}\right)=1.\end{equation}
Here $r_{\Bb\Aa^2}$ is defined as in \eqref{eqn: theta}.
\end{theorem}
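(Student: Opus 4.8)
The plan is to realize the regularized product as an arithmetic theta lift and to extract $\mathrm{ord}_\Pp(\alpha)$ from the reduction of CM points. First I would rewrite $\Theta_\Bb$ as the Siegel theta kernel attached to the positive definite rank-two lattice $(\mathfrak{a},\mathrm{N}/\mathrm{N}\mathfrak{a})$ for an ideal $\mathfrak{a}\in\Bb^{-1}$, whose genus theta series has weight $1$. Because this lattice is anisotropic, the orthogonal member of the dual pair $(\widetilde{\slz},\mathrm{O}(2))$ is zero dimensional and is precisely the $\mathrm{CL}_K$-torsor of CM points $\{z_\Cc\}$, and the weight-$1$ weakly holomorphic $f$ is exactly the Borcherds input for this signature-$(2,0)$ lattice. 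Under this seesaw the regularized integral \eqref{eqn: pet regulrized} becomes the value at $z_\Bb$ of the singular theta lift $\Phi(\,\cdot\,,f)$, and the archimedean theory of such lifts (carried out in \cite{Via CM Green}) already gives $(f,\Theta_\Bb)^{\mathrm{reg}}=\log|\alpha|$ with $\alpha\in H$. Moreover $\alpha$ is, up to roots of unity and a factor supported at the ramified prime $D$, the value $\Psi_f(z_\Bb)$ of the Borcherds product attached to $f$, built from the coefficients $c_\nu(t)$; equivalently it is a product of CM values of modular functions along the Heegner data encoded by the principal part $\{c_\nu(t):t<0\}$ of $f$. This is the number whose valuation I must compute.

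To compute $\mathrm{ord}_\Pp(\alpha)$ I would analyze the reduction of these CM values modulo $\Pp$, in the manner of Gross--Zagier's study of differences of singular moduli. The factorization is governed entirely by the reduction type of $z_\Bb$ at $\Pp$. When $\big(\tfrac pD\big)=1$ the prime $p$ splits in $K$, the elliptic curve with CM by $\mathcal{O}_K$ in the class $\Bb$ has ordinary good reduction at $\Pp$, its endomorphism ring does not grow, and it does not become isomorphic modulo $\Pp$ to any CM point of another discriminant; the relevant CM values are then $\Pp$-units and $\mathrm{ord}_\Pp(\alpha)=0$, which is \eqref{eqn: conj3+1}. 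When $\big(\tfrac pD\big)=-1$ the prime $p$ is inert, $z_\Bb$ reduces to a supersingular point, $\mathrm{End}(\tilde z_\Bb)$ becomes a maximal order in the quaternion algebra ramified at $\{p,\infty\}$, and distinct CM points can collide modulo $\Pp$, each collision contributing to $\mathrm{ord}_\Pp(\alpha)$.

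For inert $p$ the order of vanishing is computed by a local Gross--Zagier/Gross--Keating length calculation in the supersingular deformation space: the contribution of the term $c_\nu(t)$ with $t<0$ is $\mathrm{ord}_p(t)$ times the number of optimal embeddings at $\Pp$ of a CM order of norm $-t$ into the quaternionic ring $\mathrm{End}(\tilde z_\Bb)$. Translating this embedding count into ideal classes through the correspondence between supersingular points above $p$ and the class set of the quaternion algebra ramified at $\{p,\infty\}$, acted on by $\mathrm{CL}_K$, I expect to recover $r_{\Bb\Aa^2}(-t/p)$, with the class $\Aa=\Aa(\Pp)$ and its square arising from the Galois twist \eqref{eqn: wp A} that carries $\Pp$ to the self-conjugate prime $\Pp_1$; summing over $t<0$ and $\nu$ then yields \eqref{eqn: conj3}. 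The hardest step is exactly this last identification: carrying out the supersingular local computation and matching it precisely with the arithmetic counting function $r_{\Bb\Aa^2}$, tracking the genus-theoretic origin of $\Aa^2$ and of the distinguished prime $\Pp_1$, while the contribution at the ramified prime $D$ (and the $2$-adic normalization) resists this analysis and is precisely the data the theorem leaves aside. A more self-contained alternative would be to expand the theta lift directly from the Fourier expansions of $f$ and $\Theta_\Bb$ and read off the factorization from the Gross--Zagier factorization of norms of singular moduli; there the obstacle shifts to proving algebraicity of the resulting expression and matching its analytic prime contributions term by term.
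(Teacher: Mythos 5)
Your overall strategy --- realize $(f,\Theta_\Bb)^{\mathrm{reg}}$ as the CM value of a Borcherds lift and extract valuations by a Gross--Zagier style supersingular count --- is the same as the paper's, but there is a genuine gap at the central step, which is exactly the step the paper devotes Sections \ref{sec: see-saw} and \ref{sec: embedding} to. For the signature-$(2,0)$ lattice attached to $\Bb$ the Grassmannian is a single point, so Borcherds' theory attaches to $f$ a \emph{number}, not a meromorphic function: there is no divisor, there are no Heegner points, and there is nothing whose order at $\Pp$ could be computed by reduction of CM points. Your assertion that ``$\alpha$ is the value $\Psi_f(z_\Bb)$ of the Borcherds product attached to $f$'' is therefore unfounded as stated, and the dual pair $(\widetilde{\slz},\mathrm{O}(2))$ you invoke never leaves this zero-dimensional world. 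What is needed is the paper's embedding argument: realize the lattice of $\Bb$ as $N=L\cap m^{\bot}$ inside the signature-$(2,1)$ lattice $L$ (Theorem \ref{thm: L cap m bot}), establish the see-saw identity $\Phi_L(i(v^+),h)=\Phi_N(v^+,T_{L,N}(h))$ (Theorem \ref{th see-saw borcherds}), and --- the real work --- produce a preimage $h\in S^{\,!}_{1/2}(\rho_L)$ of $f$ under $T_{L,N}$ whose coefficients satisfy $b(-Ds^2)=0$ for all $s$ (Theorem \ref{thm: trick pro}). Only then does one have an actual meromorphic function $\Psi_L(h,\cdot)$ on $X_0(D)$ with divisor $\sum_d b(-d)\,\y^*_d$ and $(f,\Theta_\Bb)^{\mathrm{reg}}=\log|\Psi_L(h,\mathfrak{z})|$. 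The condition $b(-Ds^2)=0$ is not cosmetic: it guarantees that $\Phi_L(h,\cdot)$ is regular at $\mathfrak{z}$, i.e.\ that $\mathfrak{z}$ avoids $\mathrm{div}(\Psi)$, and that the hypothesis of Theorem \ref{thm: height} that $d_2/d_1$ not be a full square holds. Note that $f$ itself may perfectly well have $c_0(-Ds^2)\neq0$; in your direct approach such terms index CM points of discriminant $-Ds^2$ which ``collide'' with $z_\Bb$ identically, and a reduction/collision count breaks down on them. The paper can discard these terms only because the kernel of $T_{L,N}$ is infinite dimensional --- a freedom that simply does not exist in your zero-dimensional setup.

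Two further places where your outline does not match what the computation requires. First, $\mathrm{ord}_\Pp(\alpha)$ is not read off from the reduction of a product of CM values; it is computed as a N\'eron local height pairing $\langle \x,\sum_d b(-d)\,\y^*_d\rangle_\Pp$ with $\x=(\mathfrak{z})-(\infty)$, and since $\x$ and $\mathrm{div}(\Psi)$ share the point $\infty$ one must use the refined pairing \eqref{eqn: common support}, fixing the Tate parameter as uniformizer at the cusp and verifying $\Psi[\infty]=1$; none of this appears in your sketch. Second, because the relevant curve is $X_0(D)$, the ring into which the Clifford orders $S_{[d_1,2r,d_2]}$ must embed in the inert case is an \emph{Eichler order of level $D$} in the quaternion algebra ramified at $\{p,\infty\}$ (formula (9.3) of \cite{GZ}), not the maximal order $\mathrm{End}(\tilde z_\Bb)$ you propose. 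The level structure --- equivalently the ideal $\mathfrak{n}$ of norm $D$ and the labels $\beta$ of the divisors $\y_{d,\beta}$ --- is precisely what makes the embedding count come out as $\delta_{d_1}(r)\,r_{\mathfrak{n}\overline{\ccc}^2\aaa^2}\big(\tfrac{d_1d_2-r^2}{4Dp}\big)\,\mathrm{ord}_p\big(\tfrac{d_1d_2-r^2}{4D}\big)$, and hence, after summing over $d$ and using $f_\nu=h_{(0,\nu)}\theta_0+h_{(1,\nu)}\theta_1$, produce $r_{\Bb\Aa^2}(-t/p)$ with the specific class $\Aa$ of \eqref{eqn: wp A}. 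With maximal orders on the $j$-line this bookkeeping, and with it the identification of the class $\Bb\Aa^2$, is lost.
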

\begin{remark}Theorem \ref{thm: prime my} is compatible with, but stronger than the result of J. Schofer \cite{Sch}. More precisely,
Theorem 4.1 
of \cite{Sch} states that the sum over all ideal classes $\Bb\in\mathrm{CL}_K$ of Petersson products $(f,\Theta_\Bb)^{\mathrm{reg}}$ is the logarithm of a rational number and gives a precise factorization formula for this number. In particular, this theorem says and the \emph{sum} over all ideal classes  of the identity \eqref{eqn: conj3} as well as the identity \eqref{eqn: conj3+1} is true.\end{remark}
\begin{remark}Theorem 1 is stated in a conjectural form in \cite{Duke}.\end{remark}
\begin{remark}We don't consider the primes $p$ with $\big(\frac p D\big)$ equal to~$0$. Note that under the assumptions of Theorem 1 the only such prime is $p=D$. Theoretical and numerical evidence suggest that \begin{equation}\label{eqn: conj3 ramified}\mathrm{ord}_{\Pp} (\alpha)=\sum_{t<0}\sum_{\nu\in \Z/D\Z}c_\nu(t)\,r_{\Bb\Aa^2}\Big(\frac{-t}{p}\Big)\,\mathrm{ord}_p(t)\quad\mbox{in the case }\left(\frac{p}{D}\right)=0,\end{equation}
where  $\Pp$ is a prime of $H$ lying above $(\sqrt{-D})$, $\Pp_1$ is the unique prime above $(\sqrt{-D})$ with $\overline{\Pp_1}=\Pp_1$, the ideal class $\Aa$ satisfies $\Pp^{\sigma_\Aa}=\Pp_1$.
\end{remark}
\begin{remark}The space of vector valued modular forms $M_1(\rho)$ is canonically isomorphic to the space $M_1^+(\Gamma_0(D),\chi_D)$ of scalar valued modular forms, see~\cite{Br Bu}. \end{remark}

The idea of the proof of Theorem \ref{thm: prime my} is to use the embedding argument of Theorem 5 in \cite{Via CM Green} and reduce the computation of the regularized integral $(f,\Theta_\Bb)^{\mathrm{reg}}$ to the computation of the local height pairings between certain Heegner points on the modular curve $X_0(D)$.

The paper is organized as follows. In Section \ref{sec: vector-valued} we recall the definition and basic properties of vector valued modular forms. In Sections \ref{sec: theta} and \ref{sec: see-saw} we collect necessary facts from the theory of the Borcherds lift. Also we give a brief review of height theory on the curves in Section \ref{sec: height}. In Section \ref{sec: embedding} we construct a certain meromorphic function $\Psi$ on the modular curve $X_0(D)$. This function has zeroes and poles at the Heegner points and satisfies $(f,\Theta_\Bb)^{\mathrm{reg}}=\log|\Psi(\mathfrak{z})|$ for some CM-point $\mathfrak{z}$. In Section \ref{sec: heegner} we use the computations of the local height pairing made by B.Gross  and D. Zagier in \cite{GZ} and find the local pairings between $\mathfrak{z}$ and $\mathrm{div}(\Psi)$ over the finite places of $H$. This gives as the valuation of $\alpha=\Psi(\mathfrak{z})$ at the primes of $H$ and finishes the proof of Theorem \ref{thm: prime my}. In the last section we illustrate Theorem \ref{thm: prime my} with the following numerical example.

\noindent{\bf Example.\rm}
Consider the prime discriminant $D:=-23$. Recall that the ideal class group of the imaginary quadratic field $K:=\Q(\sqrt{-23})$ has order $3$ and it can be written as $\mathrm{CL}_K=\{\mathcal{J},\mathcal{J}^{-1},\mathcal{O}\}$, where $\mathcal{O}$ denotes the principal ideal class. Consider the modular form
\begin{equation}\label{eqn: f example} f:=23\Big[\frac{E_4E_6}{\Delta},\Theta_\mathcal{J}\Big], \end{equation} where $E_k$ denotes the Eisenstein series of weight $k$, $\Delta$ is Ramanujan's Delta function and $[\cdot,\cdot]$ denotes the Rankin-Cohen brackets. The modular form $f$ belongs to $S^{\,!}_1(\rho)$ and has integral Fourier coefficients.

Consider the following numbers in the Hilbert class field $H$ of $\Q(\sqrt{-23})$. Let $\rt$ be the real root of the polynomial $X^3-X-1$. Define
    \begin{equation}\label{eqn: p_l p_l^2} \pp_{5}=2-\rt,\; \pp_{7}=\rt+2,\; \pp_{11}=2\rt-1,\; \pp_{17}=3\rt+2,\; \pp_{19}=3\rt+1,\end{equation}
$$\qq_{23}=3-\rt,\;\pp_{25}=2\rt^2-\rt+1,\; \pp_{49}=\rt^2-2\rt+3,$$ 
where each $\pp_q$ has norm $q$.
One finds numerically that
  \begin{equation}\label{eqn: example}(f,\Theta_{\mathcal{O}})^{\mathrm{reg}}=\log|\alpha| ,\end{equation} where
  $$\label{eqn: green 23 numerical 234++}
  \qq_{23}^{-23}\,\alpha=\pp_{5}^{\,18}\,\pp_{25}^{-42}\,\pp_{7}^{\,36}\,
\pp_{49}^{-48}\,\pp_{11}^{\,4}\,\pp_{17}^{-22}
\pp_{19}^{-30}\,\rt^{\,207}.$$
In the last section we will prove this identity using Theorem \ref{thm: prime my}.

\section{Lattices and vector valued modular forms}\label{sec: vector-valued}
Recall that the group $\mathrm{SL}_2(\Z)$ has a double cover $\mathrm{Mp}_2(\Z)$, called the \emph{metaplectic group}, whose elements can be written in the form
$$\left( \left( \vcenter{\xymatrix@R=0pt@C=0pt{a&b\\c&d}}\right),\sqrt{c\tau+d}\right) $$
where $\left( \vcenter{\xymatrix@R=0pt@C=0pt{a&b\\c&d}}\right)\in\slz$ and $\sqrt{c\tau+d}$ is one of the two   holomorphic functions of $\tau$ in the upper half-plane whose square is $c\tau+d$. The multiplication is defined so that the usual formulas for the transformation of modular forms of half integral weight work, i. e.  $$ (A,f(\tau))(B, g(\tau))=(AB,f(B(\tau))g(\tau))$$ for $A,B\in \slz$ and $f,g$ are square roots of $c_A\tau+d_A$ and $c_B\tau+d_B$, respectively.

 Let $(V,\q)$ be a rational  quadratic space over $\Q$, that is a rational vector space $V$ equipped with the quadratic form  $\q:V\to\Q$. The corresponding  bilinear form on $V \times V$ is defined by $(x,y)=\frac 12\q(x+y)-\frac 12\q(x-y)$. Suppose that $V$ has signature $(b^+,b^-)$.   Let $L\subset V $ be a lattice. The dual lattice of $L$ is defined as $L'=\{x\in V| (x,L)\subset\Z \}$. We say that $L$ is even if $\q(\ell)\in\Z$ for all $\ell\in L$. In this case $L$ is contained in $L'$ and $L'/L$ is a finite abelian group.

We let the elements $e_\nu$ for $\nu\in L'/L$ be the standard basis of the vector space $\C^{L'/L}$, so that $e_\mu e_\nu=e_{\mu+\nu}$. The complex conjugation acts on $\C^{L'/L}$ by $\overline{e_\mu}=e_\mu$. Consider the scalar product on $\C^{L'/L}$ given by
\begin{equation}\label{eq:green:<,>} \langle e_\mu, e_\nu\rangle =\delta_{\mu,\nu}\end{equation} and extended to $\C^{L'/L}$ by linearity. Recall that there is a unitary representation $\rho_L$ of $\mathrm{Mp}_2(\Z)$ on $\C^{L'/L}$ defined by

\begin{align}\rho_L(\widetilde{T})(e_\nu)&=\e\bigl(\q(\nu)\bigl)\,e_\nu \\
\rho_L(\widetilde{S})(e_\nu)&=i^{(b^-/2-b^+/2)}\,|L'/L|^{-1/2}\,\sum_{\mu\in
L'/L}\e\bigl(-(\mu,\nu)\bigl)e_\mu,
\end{align}
where
\begin{equation}
\label{eqn: generators}\widetilde{T}=\left(\left(\vcenter{\xymatrix@R=0pt@C=0pt{1&1\\0& 1}}\right),1\right)\quad\mbox{and}\quad\widetilde{S}=\left(\left(\vcenter{\xymatrix@R=0pt@C=0pt{0&-1\\1& 0}}\right),\sqrt{\tau}\right)\end{equation} are the standard generators of $\mpz$.

For an integer $n\in\Z$ we denote by $L(n)$ the lattice $L$ equipped with a quadratic form $\q_{n}(\ell):=n\q(\ell)$. In the case $n=-1$ the lattices $L'(-1)$ and $(L(-1))'$ coincide and hence the groups $L'/L$ and $L(-1)'/L(-1)$ are equal. Both representations $\rho_L$ and $\rho_{L(-1)}$ act on $\C^{L'/L}$ and  for  $\gamma\in\mpz$ we have $\rho_{L(-1)}(\gamma)=\overline{\rho_L(\gamma)}$.

A vector valued modular form of half-integral weight $k$ and representation $\rho_L$ is a function $f:\h\to \C^{L'/L}$ that satisfies the following transformation law

\begin{equation}\label{eqn: vec val trans}f\left(\frac{a\tau+b}{c\tau+d}\right)=\sqrt{c\tau+d}^{\,2k}\rho_L\left(\left(\vcenter{\xymatrix@R=0pt@C=0pt{a&b\\c& d}}\right),\sqrt{c\tau+d}\right)f(\tau) .\end{equation}


We will use the notation  $\mathfrak{M}_k(\rho_L)$ for the space of real analytic,  $M_k(\rho_L)$ for the space of holomorphic, and  $M^!_k(\rho_L)$ for the space of weakly holomorphic  modular forms of weight $k$ and representation $\rho_L$. We denote by $S^!_k(\rho_L)$  the space of weakly holomorphic  modular forms of weight $k$ and representation $\rho_L$ with zero constant term at infinity.

Now we recall some standard maps between the spaces of vector-valued modular forms associated to different lattices.

 If $M\subset L$ is a sublattice of finite index, then a vector-valued modular form $f\in \mathfrak{M}_k(\rho_L)$
can be naturally viewed as a vector-valued modular form in $\mathfrak{M}_k(\rho_M)$. Indeed, we have the
inclusions $$M \subset L \subset L' \subset M'$$ and therefore
$$L/M \subset L'/M \subset M'/M.$$
We have the natural map $L'/M \to L'/L$, $\mu\to\bar{\mu}$. The following lemma is proved in  \cite{Br Yang}.
\begin{lemma}\label{res/tr}For $\mathcal{M}=\mathfrak{M},M$  or $M^!$ there are two natural maps
$$\res_{L/M} : \mathcal{M}_k(\rho_L)\to \mathcal{M}_k(\rho_M),$$
and
$$\tr_{L/M} : \mathcal{M}_k(\rho_M) \to \mathcal{M}_k,(\rho_L),$$  given by \begin{equation}\label{res}\bigl(\res_{L/M}(f)\bigl)_\mu=
\left\{\vcenter{\xymatrix@R=0pt@M=5pt{f_{\bar{\mu}},\;\;\mbox{if}\;\mu\in L'/M \\ 0\;\;\mbox{if}\;\mu\notin L'/M}}\right.\qquad\qquad\bigl(f\in \mathcal{M}_k(\rho_L),\;\mu\in M'/M\bigl),\end{equation} and  \begin{equation}\bigl(\tr_{L/M}(g)\bigl)_\lambda=\sum_{\mu\in L'/M:\,\bar{\mu}=\lambda} g_\mu \qquad\qquad\bigl(g\in\mathcal{M}_k(\rho_M),\;\;\lambda\in L'/L\bigl).\end{equation}
\end{lemma}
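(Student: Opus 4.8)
The plan is to reduce everything to showing that the two componentwise-linear maps
$\res_{L/M}\colon\C^{L'/L}\to\C^{M'/M}$ and $\tr_{L/M}\colon\C^{M'/M}\to\C^{L'/L}$ given by the stated formulas \emph{intertwine} the Weil representations, that is
$$\res_{L/M}\circ\rho_L(\gamma)=\rho_M(\gamma)\circ\res_{L/M}\quad\text{and}\quad\tr_{L/M}\circ\rho_M(\gamma)=\rho_L(\gamma)\circ\tr_{L/M}\qquad(\gamma\in\mpz).$$
Granting this, the lemma follows at once: applying the linear map $\res_{L/M}$ to the transformation law \eqref{eqn: vec val trans} for a form $f$ transforming under $\rho_L$ and commuting it past $\rho_L(\gamma)$ shows that $\res_{L/M}(f)$ obeys \eqref{eqn: vec val trans} for $\rho_M$ with the same automorphy factor $\sqrt{c\tau+d}^{\,2k}$; holomorphy (resp.\ real-analyticity, resp.\ weak holomorphy) and the growth at the cusp are preserved because each component of $\res_{L/M}(f)$ is a component of $f$ or zero, and $\q(\mu)\equiv\q(\bar\mu)\bmod\Z$ ensures that the admissible Fourier exponents match. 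The same reasoning handles $\tr_{L/M}$.

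Since $\mpz$ is generated by $\widetilde T$ and $\widetilde S$, and since the set of $\gamma$ for which the intertwining relation holds is a subgroup (as $\rho_L,\rho_M$ are homomorphisms and $\res_{L/M}$ is fixed), it suffices to verify the relation for $\res_{L/M}$ on these two generators. For $\widetilde T$ the check is immediate: a representative in $L'$ of a class $\mu\in L'/M$ simultaneously represents $\bar\mu\in L'/L$, so $\e(\q(\mu))=\e(\q(\bar\mu))$, and both sides scale the same components by the same phase while annihilating the components with $\mu\notin L'/M$.

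The essential computation is the relation for $\widetilde S$. Expanding $\res_{L/M}(\rho_L(\widetilde S)e_\nu)$ and $\rho_M(\widetilde S)\res_{L/M}(e_\nu)$ and comparing the $\beta$-component for $\beta\in M'/M$, I would reduce the right-hand side to the character sum $\sum_{\ell\in L/M}\e(-(\beta,\ell))$ over the kernel $L/M$ of the projection $L'/M\to L'/L$. By orthogonality of characters this sum vanishes unless $\beta\in L'$, in which case it equals $[L:M]$; this simultaneously reproduces the support condition (zero outside $L'/M$) and, via the index identity $|M'/M|=[L:M]^2\,|L'/L|$, converts the normalizing constant $|M'/M|^{-1/2}$ into $|L'/L|^{-1/2}$. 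As the signature $(b^+,b^-)$ of $M$ equals that of $L$, the phase $i^{(b^-/2-b^+/2)}$ is common to both sides, and the $\beta$-components agree. I expect this cancellation of the index $[L:M]$ against the ratio of normalization constants, produced by character orthogonality, to be the main point of the argument.

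Finally, I would deduce the statement for $\tr_{L/M}$ from that for $\res_{L/M}$ without a second computation. Inspecting the two formulas shows that, with respect to the Hermitian products induced by \eqref{eq:green:<,>} on $\C^{L'/L}$ and $\C^{M'/M}$, the map $\tr_{L/M}$ is the adjoint of $\res_{L/M}$. Since $\rho_L$ and $\rho_M$ are unitary, taking adjoints in $\res_{L/M}\,\rho_L(\gamma)=\rho_M(\gamma)\,\res_{L/M}$ and using $\rho_L(\gamma)^{*}=\rho_L(\gamma^{-1})$ together with the analogous identity for $\rho_M$ yields $\tr_{L/M}\,\rho_M(\gamma)=\rho_L(\gamma)\,\tr_{L/M}$, which is exactly the intertwining needed for $\tr_{L/M}$.
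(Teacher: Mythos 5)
Your proof is correct, but there is nothing in the paper to compare it with: the paper offers no argument for this lemma, stating only that it ``is proved in \cite{Br Yang}.'' Your argument is a complete, self-contained proof along the standard lines of that reference. The reduction is right: $\res_{L/M}$ and $\tr_{L/M}$ are $\tau$-independent linear maps $\C^{L'/L}\to\C^{M'/M}$ and $\C^{M'/M}\to\C^{L'/L}$, so modularity, (weak) holomorphy and real-analyticity all follow once the intertwining identities with $\rho_L$ and $\rho_M$ are known, and since the set of $\gamma\in\mpz$ satisfying such an identity is a subgroup, checking the generators $\widetilde{T}$ and $\widetilde{S}$ suffices. Both of your key computations are sound: for $\widetilde{T}$, a representative in $L'$ of $\mu\in L'/M$ also represents $\bar\mu\in L'/L$, so $\q(\mu)\equiv\q(\bar\mu)\bmod\Z$; for $\widetilde{S}$, the character sum $\sum_{\ell\in L/M}\e(-(\beta,\ell))$ vanishes unless $\beta\in L'$ (reproducing the support condition) and otherwise equals $[L:M]$, while the index identity $|M'/M|=[L:M]^2\,|L'/L|$ (which follows from $[M':L']=[L:M]$) converts $|M'/M|^{-1/2}[L:M]$ into $|L'/L|^{-1/2}$; the eighth root of unity is common to both sides because $M\otimes\R=L\otimes\R$ have the same signature. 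Obtaining the $\tr_{L/M}$ case by observing that $\tr_{L/M}$ is the adjoint of $\res_{L/M}$ for the products \eqref{eq:green:<,>} and using unitarity of $\rho_L$, $\rho_M$ is an efficient touch that avoids a second generator computation. The only steps you describe rather than carry out in full (the componentwise expansion of both sides of the $\widetilde{S}$ identity) are routine and go through exactly as you indicate.
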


Now suppose that $M$ and $N$ are two even lattices and $L=M\oplus N$. Then we have $$L'/L\cong (M'/M)\oplus(N'/N).$$ Moreover
$$\C^{L'/L}\cong\C^{M'/M}\otimes\C^{N'/N} $$ as unitary vector spaces and naturally
$$\rho_L=\rho_M\otimes\rho_N. $$
The following lemma can be easily deduced from the unicity of the representation $\rho_M$ and the fact that $\rho_{M(-1)}=\overline{\rho}_M$.
\begin{lemma}\label{lemma:tensor}
For two modular forms $f\in \mathcal{M}_k(\rho_L)$ and $g\in\mathcal{M}_l(\rho_{M(-1)})$ the function $$h:=\langle f, g\rangle_{\C^{M'/M}}=\sum_{\nu\in N'/N} e_\nu\sum_{\mu\in M'/M} f_{\mu\oplus\nu}\,g_\mu $$ belongs to $\mathcal{M}_{k+l}(\rho_N)$.
\end{lemma}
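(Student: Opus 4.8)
The plan is to verify directly that $h$ obeys the defining transformation law of $\mathcal{M}_{k+l}(\rho_N)$ and then to check that the analytic and growth conditions are inherited from $f$ and $g$. Since $h$ takes values in $\C^{N'/N}$ by construction, the only substantive point is the modularity, and the whole argument rests on a single compatibility between the pairing and the representations.

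The key algebraic input is that the bilinear pairing $\langle\cdot,\cdot\rangle_{\C^{M'/M}}$ is invariant under the simultaneous actions of $\rho_M$ and $\rho_{M(-1)}$. Writing $U=\rho_M(\gamma)$ for $\gamma\in\mpz$ and recalling $\rho_{M(-1)}(\gamma)=\overline{\rho_M(\gamma)}=\overline U$, I would show that $\langle U u,\overline U v\rangle=\langle u,v\rangle$ for all $u,v\in\C^{M'/M}$. Because the pairing is bilinear with $\langle e_\mu,e_\nu\rangle=\delta_{\mu,\nu}$, it equals $u^{t}(U^{t}\overline U)v$, and the unitarity of $\rho_M$ gives $U^{t}\overline U=\overline{U^{*}U}=I$. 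This is precisely the statement that $\rho_{M(-1)}$ is the contragredient of $\rho_M$, which is where the unicity/unitarity of the Weil representation enters.

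With this in hand, I would compute the transformation of $h$ under $\gamma=(A,\phi)$ componentwise. Using $\rho_L=\rho_M\otimes\rho_N$ to write $f_{\mu\oplus\nu}(\gamma\tau)=\phi^{2k}\sum_{\mu',\nu'}(\rho_M(\gamma))_{\mu,\mu'}(\rho_N(\gamma))_{\nu,\nu'}f_{\mu'\oplus\nu'}(\tau)$ together with $g_\mu(\gamma\tau)=\phi^{2l}\sum_{\mu''}(\overline{\rho_M(\gamma)})_{\mu,\mu''}g_{\mu''}(\tau)$, the sum over $\mu$ collapses by the invariance above, i.e.\ by $\sum_\mu(\rho_M(\gamma))_{\mu,\mu'}(\overline{\rho_M(\gamma)})_{\mu,\mu''}=\delta_{\mu',\mu''}$, leaving $h_\nu(\gamma\tau)=\phi^{2(k+l)}\sum_{\nu'}(\rho_N(\gamma))_{\nu,\nu'}h_{\nu'}(\tau)$. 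This is exactly the weight $k+l$, representation $\rho_N$ transformation law; since $\mpz$ is generated by $\widetilde T$ and $\widetilde S$, one could equally well check it on those two generators.

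Finally, I would observe that the analytic hypotheses pass to $h$: each component $h_\nu=\sum_\mu f_{\mu\oplus\nu}g_\mu$ is a finite sum of products, so real-analyticity (for $\mathfrak{M}$) and holomorphy (for $M$ and $M^!$) are immediate, and because the product of two $q$-expansions bounded below in their exponents is again bounded below, the growth condition at the cusp is preserved in all three cases. I do not expect any genuine obstacle; the only place demanding care is the conjugation bookkeeping in $\langle Uu,\overline Uv\rangle=\langle u,v\rangle$, where it is essential that the pairing is bilinear rather than Hermitian and that the unitarity of $\rho_M$ is applied in the correct order.
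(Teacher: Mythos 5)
Your proof is correct and is essentially the paper's intended argument: the paper gives no written proof, only the remark that the lemma ``can be easily deduced'' from $\rho_{M(-1)}=\overline{\rho}_M$ together with the unitarity of the Weil representation, and that is exactly the contragredient identity $\sum_\mu(\rho_M(\gamma))_{\mu,\mu'}(\overline{\rho_M(\gamma)})_{\mu,\mu''}=\delta_{\mu',\mu''}$ on which your componentwise collapse of the $\mu$-sum rests. Your handling of the weight factors $\phi^{2k}\phi^{2l}=\phi^{2(k+l)}$ and of the analytic/growth conditions for $\mathfrak{M}$, $M$, and $M^!$ fills in the remaining routine details correctly.
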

\section{Theta functions and Theta lifts}\label{sec: theta}
In this section we recall the definition of regularized theta lift given by Borcherds in the paper~\cite{Bo1}.

We let $L$ be an even lattice of signature $(2,b^-)$ and let $L'$ be its dual lattice. The positive Grassmannian $\mathrm{Gr}^+(L)$ is
the set of positive definite two-dimensional subspaces $v^+$ of $L\otimes\R$. We write $v^-$ for the orthogonal complement of $v^+$, so that $L\otimes\R$ is the orthogonal direct sum of the positive definite subspace $v^+$ and the negative definite subspace $v^-$. The projections of a vector $\ell\in L\otimes\R$ into the subspaces $v^+$ and $v^-$ are denoted by $\ell_{v^+}$ and  $\ell_{v^-}$, respectively, so that
$\ell=\ell_{v^+}+\ell_{v^-}$.

The vector-valued  Siegel theta function $\Theta_L:\h\times \mathrm{Gr}^+(L)\to\C^{L'/L}$ of $L$ is defined by
\begin{equation}\label{eqn: theta kernel}\Theta_L(\tau,v^+)=y^{b^-/2}\sum_{\lambda\in L'/L}e_\lambda\sum_{\ell\in L+\lambda}\e\bigl(\q(\ell_{v^+})\tau +\q(\ell_{v^-})\bar{\tau}\bigl).\end{equation}
Theorem 4.1 in \cite{Bo1} says that $\Theta_L(\tau,v^+)$ is a real-analytic vector-valued modular form of weight $1-b^-/2$ and representation $\rho_L$ with respect to the variable~$\tau$.

For $f\in \mathfrak{M}_{1-b/2}(\rho_L)$
we define the \emph{regularized theta integral} by
\begin{equation}\label{theta}\Phi_L(v^+,f):=\sideset{}{^\mathrm{reg}}\int
\limits_{\slz\backslash\h}\;\;\langle f(\tau),\overline{\Theta_L(\tau,v^+)}\rangle\, y^{-1-b^-/2}\,dx\,dy \end{equation}
(here the product $\langle,\rangle$ is defined by
$\langle e_\mu,e_\nu \rangle=\delta_{\mu,\nu}$).

The integral is often divergent and has to be regularized. In this paper we consider regularized lifts of weakly holomorphic cusp forms.  In this case the regularization is simpler than in the general situation. For  $f\in S^{\,!}_{1-b/2}(\rho_L)$ we set
$$\Phi_L(v^+,f):=\lim_{T\to\infty}\int_{\mathcal{F}_T}\langle f(\tau),\overline{\Theta_L(\tau,v^+)}\rangle\,y^{-1-b^-/2}\,dx\,dy,$$
where $\mathcal{F}_T$ is a truncated fundamental domain introduced in Section \ref{sec: intro}.

Denote by $\mathrm{Aut}(L)$ the group of those isometries of $L\otimes\R$ that fix each coset of $L$ in $L'$. The action of $\mathrm{Aut}(L)$ on $f$ is given by action on $L'/L$. The regularized integral $\Phi_L(v^+,f)$ is a function on the Grassmannian $\mathrm{Gr}^+(L)$ that is invariant under $\mathrm{Aut}(L)$.

In the case when $L$ has signature $(2,b^-)$ the Grassmanian $\mathrm{Gr}^+(L)$ carries the structure of a Hermitian symmetric space. If $X$
and $Y$ are an oriented orthogonal base of some element $v^+$ of $\mathrm{Gr}^+(L)$, then we map $v^+$ to
the point of the complex projective
space $\mathbb{P}(L\otimes
 \C)$ represented by $Z = X + iY\in L\otimes\C$. The fact that $Z = X + iY$
has norm $0$ is equivalent to saying that $X$ and $Y$ are orthogonal and have the same
norm. This identifies $\mathrm{Gr}^+(L)$  with an open subset of the norm 0 vectors of $\mathbb{P}(L\otimes
 \C)$ in a canonical way, and gives $\mathrm{Gr}^+(L)$ a complex structure invariant under the
subgroup $\mathrm{O}^+(L\otimes \R)$ of index 2 of $\mathrm{O}(L\otimes \R)$ of elements preserving the orientation on the 2
dimensional positive definite subspaces. Thus, the open subset $$\mathcal{P}=\big\{[Z]\in\mathbb{P}(L\otimes\C)\big| (Z,Z)=0\;\mbox{and}\; (Z,\overline{Z})>0\big\}$$
is isomorphic to $\mathrm{Gr}^+(L)$ by mapping $[Z]$ to the subspace $\R\Re(Z)+\R\Im(Z)$.

The following theorem of Borcherds relates the regularized theta lifts associated to the lattice $L$ of signature $(2,b)$ with the infinite products introduced in his earlier paper \cite{Bo95}.

\begin{theorem}\label{thm: borcherds inf pro}(\cite{Bo1}, Theorem 13.3) Suppose that $f\in S^{\,!}_{1-b/2}(\rho_L)$ has the Fourier expansion $$f(\tau)=\sum_{\lambda\in L'/L}\sum_{n\gg-\infty}c_\lambda (n)\,\e(n\tau)\,e_\lambda $$ and the Fourier coefficients $c_\lambda (n)$ are integers for $n\leq 0$. Then there is a meromorphic function $\Psi_L(Z,f)$ on $\mathcal{P}$ with the following properties.\begin{description}\item{1.} $\Psi$ is an automorphic function for the group $\mathrm{Aut}(L,f)$ with respect to some unitary character of $\mathrm{Aut}(L,f)$\item{2.} The only zeros and poles of $\Psi_L$ lie on the rational quadratic divisors $\ell^\bot$ for $\ell\in L$, $\q(\ell)<0$ the order of $l^\bot$ is equal to $$\sum_{\substack{ x\in\R^+\,:\\  xl\in L'}} c_{xl}\bigl(\q(xl)\bigl) $$ \item{3.} $$ \Phi_L(Z,f)=
-4\log|\Psi_L(Z,f)|.$$
\item{4.} One can write an explicit infinite product expansion converging in a neighborhood of each cusp of $\mathrm{Gr}^+(L)$.
    \end{description}
    \end{theorem}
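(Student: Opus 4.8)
The strategy is to compute the regularized integral $\Phi_L(Z,f)$ in closed form by reducing the rank of the lattice, and then to recognize the answer as $-4\log$ of an explicit infinite product. First I would fix a primitive isotropic vector $z\in L$ together with a vector $z'\in L'$ satisfying $(z,z')=1$, and set $K=(L\cap z^\perp)/\Z z$, an even lattice of signature $(1,b-1)$. This choice realizes the domain $\mathcal{P}$ as a tube domain over a cone in $K\otimes\R$, with coordinate $Z=X+iY$ as in the theorem. The first technical step is to rewrite the Siegel theta kernel $\Theta_L(\tau,v^+)$ in terms of the lower-dimensional kernel $\Theta_K$: applying Poisson summation in the $z$-direction (Borcherds' theta transformation formula in \cite{Bo1}) expresses $\Theta_L$ as a sum over $K'/K$ weighted by factors $y^{1/2}\e(\dots)$ that are Gaussian in $\tau$ and carry the tube coordinate $Z$.

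Next I would substitute this expansion into $\Phi_L(Z,f)=\lim_{T\to\infty}\int_{\mathcal{F}_T}\langle f,\overline{\Theta_L}\rangle\,y^{-1-b^-/2}\,dx\,dy$ and unfold term by term. Using modular invariance one unfolds the integral against the non-constant Fourier modes to the strip $\{0<x<1\}$, where each coefficient $c_\lambda(n)$ pairs with one term of the theta expansion, the $x$-integral produces a Kronecker-type congruence condition, and the resulting $y$-integrals are elementary Gamma integrals. Summing the geometric series $\sum_{m\ge1}m^{-1}\e(m(\cdot))$ that arises converts these contributions into logarithms, yielding a formula of the shape $\Phi_L(Z,f)=\Phi_K(\dots)-4\sum_{\ell}c_\ell(\q(\ell))\log\big|1-\e((\ell,Z))\big|$ up to linear (Weyl-vector) terms. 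The constant Fourier mode must be treated separately: it carries the divergence of the integral, and matching it against the truncation boundary as $T\to\infty$ produces the polynomial growth term $(w,Z)$ for a Weyl vector $w$.

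From this closed form one reads off the candidate $\Psi_L(Z,f)=\e((w,Z))\prod_{\ell}\big(1-\e((\ell,Z))\big)^{c_\ell(\q(\ell))}$, the product running over the appropriate positive cone in $K'$. Property 3 then holds by construction, since $-4\log|\Psi_L|$ reproduces the formula for $\Phi_L$; Property 4 is this product, valid in its region of convergence near the cusp attached to $z$. Property 2 follows by tracking which factors $1-\e((\ell,Z))$ vanish: a zero or pole occurs exactly on the rational quadratic divisors $\ell^\perp$ with $\q(\ell)<0$, of order $\sum_{x\in\R^+,\,x\ell\in L'}c_{x\ell}(\q(x\ell))$, matching the logarithmic singularities of $\Phi_L$. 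One must then check that the locally defined product continues to a single-valued meromorphic function on all of $\mathcal{P}$, which is done by comparing the product expansions at the different cusps. For Property 1 I would argue that $\Phi_L(v^+,f)$ is a function on $\mathrm{Gr}^+(L)$ invariant under $\mathrm{Aut}(L,f)$ directly from its definition; hence $|\Psi_L|$ is $\mathrm{Aut}(L,f)$-invariant, so for $\gamma\in\mathrm{Aut}(L,f)$ the ratio $\Psi_L(\gamma Z)/\Psi_L(Z)$ is a holomorphic, nowhere-vanishing function of absolute value $1$ (using that $\gamma$ preserves the divisor), hence a constant of modulus one, i.e. a unitary character.

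I expect the main obstacle to be the regularization and unfolding in the second step: the Siegel theta integral genuinely diverges, so the interchange of summation and integration must be justified with care, and the constant Fourier mode has to be isolated and analytically continued — for instance via an auxiliary parameter $\int\langle f,\overline{\Theta_L}\rangle\,y^{-s}$ — so that its finite part yields exactly the Weyl vector $w$ and the correct $-4\log$ normalization. The second delicate point is promoting the product, which a priori converges only near one cusp, to a global meromorphic function; this requires the quadratic-form input that the growth of the coefficients $c_\lambda(n)$ is controlled and that the Weyl vectors at different cusps are compatible, and is where the signature $(2,b)$ structure of $L$ is essential.
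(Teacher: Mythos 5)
This theorem is not proved in the paper at all: it is imported verbatim as Theorem 13.3 of \cite{Bo1}, so the only meaningful comparison is with Borcherds' original argument, and your outline is essentially a faithful reconstruction of it — theta decomposition of $\Theta_L$ along a primitive isotropic vector $z$ with $K=(L\cap z^\perp)/\Z z$, unfolding of the regularized integral, isolation of the constant mode into the Weyl-vector term, the resulting product expansion $\e((w,Z))\prod_\ell(1-\e((\ell,Z)))^{c_\ell(\q(\ell))}$, and the modulus-one-ratio argument for the unitary character. The two points you flag as delicate (justifying the regularized unfolding, and promoting the locally convergent product to a single-valued meromorphic function on all of $\mathcal{P}$, which uses the integrality of the $c_\lambda(n)$ for $n\le 0$) are precisely where the substance of Borcherds' proof lies; note also that the clean normalization $\Phi_L=-4\log|\Psi_L|$ with no extra $\log|Y|$ term is valid here only because $f\in S^{\,!}_{1-b/2}(\rho_L)$ forces $c_0(0)=0$.
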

\begin{remark}Theorem 13.3 in \cite{Bo1} is formulated in more general settings and an explicit infinite product expansion is given there.\end{remark}

At the end of this section let us consider the lattices of signature $(2,0)$ in more detail. Recall that there is a one-to-one correspondence between equivalence classes of even lattices of fundamental discriminant $-D$ and ideal classes of the imaginary quadratic field $K=\Q(\sqrt{-D})$. Fix an ideal class $\Bb$. For a fractional ideal $\bbb\in\Bb$ consider an even lattice $N=(\bbb,\q)$, where the quadratic form $\q$ is defined as $\q(x)=\frac{1}{\mathrm{N}_{K/\Q}(\Bb)}\,\mathrm{N}_{K/\Q}(x)$ for $x\in\bbb$. Up to isometry, the lattice $N$ depends only on $\Bb$ and not on the particular choice of $\bbb$. Hence, we denote this lattice by $N_\Bb$. Moreover, the representation $\rho_N$ defined in Section~\ref{sec: vector-valued} depends only on the genus of $N$. Thus, for $D$ prime the representation $\rho_N$ is the same for all fractional ideals of $K$ and we denote this representation by $\rho$. The theta function $\Theta_\Bb$ defined in Section~1 coincides  with the theta function $\Theta_N$ defined by \eqref{eqn: theta kernel}.  It follows from the definition of theta lift that for $f\in S_1^{\,!}(\rho)$
$$\Phi_N(f)=(f,\Theta_\Bb)^{\mathrm{reg}}. $$
Thus, the regularized Petersson product \eqref{eqn: pet regulrized} can be seen as a theta lift of $f$ to a zero-dimensional Grassmanian $\mathrm{Gr}^+(N)$. From this point of view Theorem \ref{thm: prime my} is an ``arithmetic analog'' of Theorem \ref{thm: borcherds inf pro}. This approach is studied in the upcoming work of S. Ehlen \cite{Ehlen1}.  

\section{A see-saw identity}\label{sec: see-saw}
In the paper \cite{Ku84} S. Kudla  introduced the notion of a ``see-saw dual reductive pair''. It gives rise to a family of identities between inner products of automorphic forms on different groups, thus clarifying  the source of identities of this type which appear  in many places in the literature, often obtained from complicated manipulations. Here we prove a see-saw identity for the regularized theta integrals described in the previous section. This identity will play a central role in our proof of Theorem \ref{thm: prime my}.

 Suppose that $(V,\q)$ is a rational quadratic space of signature $(2,b)$ and $L\subset V $ is an even lattice. Let $V=V_1\oplus V_2$ be the rational orthogonal splitting of $(V,\q)$ such that the space $V_1$ has the signature $(2,b-d)$ and the space $V_2$ has the signature $(0,d)$. Consider two lattices $N:=L\cap V_1$ and $M:=L\cap V_2$. We have two orthogonal projections
 $$ \mathrm{pr}_M:L\otimes\R\to M\otimes\R \quad\mbox{and}\quad\mathrm{pr}_N:L\otimes\R\to N\otimes\R.$$
 Let $M'$ and $N'$ be the dual lattices of $M$ and $N$. 
 We have the following  inclusions
 $$ M\subset L,\quad N\subset L,\quad M\oplus N\subseteq L\subseteq L'\subseteq M'\oplus N', $$ and equalities of the sets $$\mathrm{pr}_M(L')=M',\quad\mathrm{pr}_N(L')=N'. $$

  Consider the rectangular $|L'/L|\times|N'/N|$ dimensional matrix $T_{L,N}$ with entries $$\vartheta_{\lambda,\nu}(\tau)=\sum_{\xymatrix@R=0pt@C=0pt@M=1pt{\scriptstyle m\in M': \\ \scriptstyle  m+\nu\in\lambda+L}}\e\bigl(-\q(m)\tau\bigl),$$ where $ \lambda\in L'/L, \nu\in N'/N, \tau\in\h .$
  This sum is well defined since $N\subset L$. Note that the lattice $M$ is negative definite and hence the  series converges absolutely.

   For  a function $f=(f_\lambda)_{\lambda\in L'/L}\in M_{k+d/2}(\rho_N)$ we define $g=(g_\nu)_{\nu\in N'/N}$  by
\begin{equation}\label{T_L,N} g_\nu(\tau)=\sum_{\lambda\in L'/L}\vartheta_{\lambda,\nu}(\tau)\,f_\lambda(\tau).\end{equation}
In other words \begin{equation}\label{T_L,N 1}g=T_{L,N}f ,\end{equation}
where $f$ and $g$ are considered as column vectors.

\begin{theorem}Suppose that the lattices $L$, $M$ and $N$  and the functions $f$, $g$ are defined as above. Then the function $g$ belongs to $M_{k+d/2}(\rho_N)$.
Thus,  there is a map $T_{L,N}:M_k(\rho_L)\to M_{k+d/2}(\rho_N)$ defined by \eqref{T_L,N 1}.
\end{theorem}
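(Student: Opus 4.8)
The plan is to factor the map $T_{L,N}$ through the two modularity-preserving operations already available: the restriction $\res_{L/(M\oplus N)}$ of Lemma~\ref{res/tr} and the bilinear contraction against a holomorphic theta series, as in Lemma~\ref{lemma:tensor}.

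First I would recognize the entries $\vartheta_{\lambda,\nu}$ as the Fourier components of a classical theta function. Since $M$ is negative definite of rank $d$, the lattice $M(-1)$ is positive definite of signature $(d,0)$; its Siegel theta series \eqref{eqn: theta kernel} carries no $y$-factor (because $b^-=0$) and is therefore the honestly holomorphic vector-valued form
\[\Theta_{M(-1)}(\tau)=\sum_{\mu\in M'/M}e_\mu\sum_{\ell\in M+\mu}\e\bigl(-\q(\ell)\tau\bigr),\]
which by the standard theory of theta series of positive definite even lattices (Theorem~4.1 of \cite{Bo1}) lies in $M_{d/2}(\rho_{M(-1)})$, with $\rho_{M(-1)}=\overline{\rho_M}$. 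Its absolute convergence for $\Im\tau>0$ is exactly the convergence already noted, since $-\q\geq 0$ on $M'$.

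Next, because $V_1=N\otimes\R$ and $V_2=M\otimes\R$ together span $V$, the orthogonal sublattice $M\oplus N$ has finite index in $L$, so Lemma~\ref{res/tr} gives $\tilde f:=\res_{L/(M\oplus N)}(f)\in M_k(\rho_{M\oplus N})$. Invoking $\rho_{M\oplus N}=\rho_M\otimes\rho_N$ and $\rho_{M(-1)}=\overline{\rho_M}$, Lemma~\ref{lemma:tensor} applies to the pair $(\tilde f,\Theta_{M(-1)})$ and contracts the $M'/M$-index to produce
\[h:=\langle \tilde f,\Theta_{M(-1)}\rangle_{\C^{M'/M}}=\sum_{\nu\in N'/N}e_\nu\sum_{\mu\in M'/M}\tilde f_{\mu\oplus\nu}\,(\Theta_{M(-1)})_\mu\ \in\ M_{k+d/2}(\rho_N).\]

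The remaining and principal task is the bookkeeping identity $h=g=T_{L,N}f$, which is purely a matter of re-indexing. By Lemma~\ref{res/tr} one has $\tilde f_{\mu\oplus\nu}=f_{\overline{\mu\oplus\nu}}$ when $\mu\oplus\nu\in L'/(M\oplus N)$ and $0$ otherwise, while $(\Theta_{M(-1)})_\mu=\sum_{\ell\in M+\mu}\e(-\q(\ell)\tau)$; substituting these rewrites $h_\nu$ as a single sum over those $m\in M'$ with $m+\nu\in L'$, with summand $f_{\overline{m+\nu}}\,\e(-\q(m)\tau)$. I would then check that $m\mapsto\lambda=\overline{m+\nu}\in L'/L$ sends this index set bijectively onto the pairs $(\lambda,m)$ in the definition of $\vartheta_{\lambda,\nu}$, i.e.\ that the global condition ``$m+\nu\in\lambda+L$ for some $\lambda\in L'/L$'' coincides with ``$\bar m\in M'/M$ with $\bar m\oplus\nu\in L'/(M\oplus N)$ and $\ell=m\in M+\bar m$''. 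The one point requiring care is that these coset conditions are well defined under change of representative — in $\nu$ this is guaranteed by $N\subset L$, and in $\ell\in M+\mu$ by $M\subset L$ — after which $h_\nu=\sum_\lambda\vartheta_{\lambda,\nu}f_\lambda=g_\nu$. The holomorphicity and weight of $g$ are then inherited from Lemma~\ref{lemma:tensor}, so the only genuine obstacle is verifying this index bijection carefully rather than any analytic input.
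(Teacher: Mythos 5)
Your proposal is correct and follows essentially the same route as the paper: the paper likewise writes $T_{L,N}(f)=\big\langle \res_{L/(M\oplus N)}(f),\,\Theta_{M(-1)}\big\rangle_{\C^{M'/M}}$ with $\Theta_{M(-1)}\in M_{d/2}(\rho_{M(-1)})$ and concludes by Lemma~\ref{lemma:tensor}. The only difference is that you spell out the re-indexing identity which the paper asserts directly from \eqref{T_L,N} and \eqref{res}, and your verification of it is accurate.
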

\begin{proof} Consider the function $$\Theta_{M(-1)}(\tau)=\overline{\Theta_M(\tau)}=\sum_{\mu\in M'/M} e_\mu\sum_{m\in M+\mu}\e(-\q(m)\tau),$$ which belongs to $M_{d/2}(\rho_{M(-1)})$. It follows from \eqref{T_L,N} and \eqref{res} that $$T_{L,N}(f)= \big\langle \mathrm{res}_{L/{M\oplus N}}(f), \Theta_{M(-1)}\big\rangle_{\C^{M'/M}}.$$
Thus, from Lemma \ref{lemma:tensor} we deduce that $T_{L,N}(f)$ is in $M_{k+d/2}(\rho_N)$.\end{proof}

\begin{theorem}\label{th see-saw borcherds}Let $L$, $M$, $N$ be as above. Denote by  $i:\mathrm{Gr}^+(N)\to \mathrm{Gr}^+(L)$  the natural embedding induced by the inclusion $N\subset L$.  Then for $v^+\in \mathrm{Gr}^+(N)$ and the theta lift of a function $f\in M_{1-b/2}^{\,!}(\rho_L)$  the following holds \begin{equation} \label{see-saw}\Phi_L(i(v^+),f)=\Phi_N(v^+,T_{L,N}(f)).\end{equation}
\end{theorem}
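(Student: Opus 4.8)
The plan is to prove the identity at the level of the integrands, before any integration or regularization is carried out, and only afterwards to integrate over the common truncated domain. Concretely, I would establish the pointwise equality
$$\bigl\langle f(\tau),\overline{\Theta_L(\tau,i(v^+))}\bigr\rangle\,y^{-1-b/2}=\bigl\langle T_{L,N}(f)(\tau),\overline{\Theta_N(\tau,v^+)}\bigr\rangle\,y^{-1-(b-d)/2}$$
for every $\tau\in\h$. Since the $\tau$-integral on both sides runs over $\slz\backslash\h$ and the regularizations of $\Phi_L$ and $\Phi_N$ are both effected by the same truncation $\mathcal{F}_T$ with $T\to\infty$, integrating this identity and passing to the limit yields \eqref{see-saw} immediately, with no interchange-of-limits issue to address.

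The geometric input I would isolate first is the behaviour of the orthogonal projections under the embedding $i$. A point $v^+\in\mathrm{Gr}^+(N)$ is a positive definite plane inside $V_1\otimes\R$; because $V_2$ is negative definite and orthogonal to $V_1$, the image $i(v^+)$ is the very same plane viewed inside $L\otimes\R$, and its orthogonal complement there is $v^-=v_N^-\oplus(M\otimes\R)$, where $v_N^-$ denotes the complement of $v^+$ inside $N\otimes\R$. Writing $\ell=\mathrm{pr}_N(\ell)+\mathrm{pr}_M(\ell)=n+m$ for $\ell\in L\otimes\R$, one finds $\ell_{v^+}=n_{v^+}$ and $\ell_{v^-}=n_{v_N^-}+m$, so that
$$\q(\ell_{v^+})=\q(n_{v^+}),\qquad \q(\ell_{v^-})=\q(n_{v_N^-})+\q(m),$$
the cross term vanishing since $n_{v_N^-}\in V_1$ and $m\in V_2$ are orthogonal. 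This factors the exponential in the Siegel theta kernel of $L$ as the product of the corresponding $N$-kernel exponential with $\e(\q(m)\bar{\tau})$.

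With this factorization I would substitute into the definition of $\Theta_L(\tau,i(v^+))$ and reorganize the inner sum. Because $V=V_1\oplus V_2$, the assignment $\ell\mapsto(\mathrm{pr}_M\ell,\mathrm{pr}_N\ell)$ is a bijection of $L+\lambda$ onto the pairs $(m,n)\in M'\times N'$ with $m+n\in\lambda+L$, using $\mathrm{pr}_M(L')=M'$, $\mathrm{pr}_N(L')=N'$ together with $M\oplus N\subseteq L\subseteq M'\oplus N'$. Pairing the resulting $L$-kernel against $f$ and collecting terms by the class $\nu=n+N\in N'/N$ reproduces exactly the multiplier $\vartheta_{\lambda,\nu}(\tau)=\sum_{m\in M',\,m+\nu\in\lambda+L}\e(-\q(m)\tau)$ defining $T_{L,N}(f)$, while the remaining sum over $n\in N+\nu$ assembles into $\overline{\Theta_N(\tau,v^+)}$; the front power of $y$ splits as $y^{b/2}=y^{(b-d)/2}\cdot y^{d/2}$, the first factor belonging to $\Theta_N$ and the second accounting for the difference between the measures $y^{-1-b/2}$ and $y^{-1-(b-d)/2}$. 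This coset bookkeeping — matching the condition $m+n\in\lambda+L$ simultaneously against the constraint in $\vartheta_{\lambda,\nu}$ and the summation range $n\in N+\nu$, and checking that the $y$-exponents balance — is the one genuinely delicate step; the projection computation and the final integration are routine once it is in place. Equivalently, the whole reindexing is packaged intrinsically by Lemma \ref{lemma:tensor} and the identity $T_{L,N}(f)=\langle\mathrm{res}_{L/(M\oplus N)}(f),\Theta_{M(-1)}\rangle_{\C^{M'/M}}$ recorded above, which I would use to streamline the verification.
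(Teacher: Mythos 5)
Your proposal is correct and follows essentially the same route as the paper's proof: factor the Siegel theta kernel of $L$ along the orthogonal splitting via $\q(\ell_{v^+})=\q(n_{v^+})$ and $\q(\ell_{v^-})=\q(m)+\q(n_{v^-})$, reindex the coset sum (using $L'\subseteq M'\oplus N'$ and $N\subset L$) to produce the multipliers $\vartheta_{\lambda,\nu}$ and the kernel $\Theta_N$, and then integrate the resulting pointwise identity over the common truncated domain. Your explicit balancing of the $y$-exponents is in fact slightly more careful than the paper's assertion that the two scalar products are equal, which tacitly absorbs the $y^{d/2}$ discrepancy between the theta kernels into the differing measures $y^{-1-b/2}\,dx\,dy$ and $y^{-1-(b-d)/2}\,dx\,dy$.
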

\begin{proof}
For a vector $\ell\in L'$ denote $m=\mathrm{pr}_M(\ell)$ and $n=\mathrm{pr}_N(\ell)$. Recall that $m\in M'$ and $n\in N'$. Since $v^+$ is an element of $\mathrm{Gr}^+(N)$  it is orthogonal to $M$. We have $$\q(\ell_{v^+})=\q(n_{v^+}),\quad\q(\ell_{v^-})=\q(m)+\q(n_{v^-}).$$ Thus for $\lambda\in L'/L$ we obtain $$\Theta_{\lambda+L}(\tau,v^+)=\sum_{\ell\in\lambda+L}\e\bigl( \q(\ell_{v^+})\tau+ \q(\ell_{v^-}\bigl)\bar{\tau}) $$
$$=\sum_{\xymatrix@R=0pt@C=0pt@M=1pt{\scriptstyle m\in M',\, n\in N': \\ \scriptstyle m+n\in\lambda+L}}\e\bigl( \q(n_{v^+})\tau+ \q(n_{v^-})\bar{\tau}+\q(m)\bar{\tau}\bigl). $$
Since $N\subset L$ we can rewrite this sum as
$$ \Theta_{\lambda+L}(\tau,v^+)=\sum_{\nu\in N'/N}\Theta_{\nu+N}(\tau,v^+)\,\overline{\vartheta_{\nu,\lambda}(\tau)}.$$
Thus, we see that for $f=(f_\lambda)_{\lambda\in L'/L}$ the following scalar products are equal
$$\langle f,\overline{\Theta_L(\tau,v^+)}\rangle =\langle T_{L,N}(f),\overline{\Theta_N(\tau,v^+)}\rangle .$$ Therefore, the regularized integrals \eqref{theta} of both sides of the equality are also equal.  \end{proof}

\section{Local and global heights on curves}\label{sec: height}
In this section we review the basic ideas of N\'eron's theory. A more detailed overview of this topic is given in \cite{Gross}. Let $X$ be a
non-singular, complete, geometrically connected curve over the locally compact
field $k_v$. We normalize the valuation map $|\;|_v:k_v^*\to\R_+^\times$ so that for any Haar
measure $dx$ on $k_v$ we have the formula $\alpha^*(dx) = |\alpha|_v\cdot dx$.

 Let $a$ and $b$ denote divisors
of degree zero on $X$ over $k_v$ with disjoint support. Then N\'eron defines a \emph{local symbol} $\langle a, b\rangle _v$ in $\R$  which is \begin{description} \item{(i)} bi-additive, \item{(ii)} symmetric, \item{(iii)} continuous, \item{(vi)} satisfies the property
$\langle \sum m_x (x), (f) \rangle _v=\log |\prod f(x)^{m_x}|_v$, when $b = (f)$ is principal.
\end{description} These properties characterize
the local symbol completely.

When $v$ is archimedean, one can compute the N\'eron symbol as follows.
Associated to $b$ is a Green's function $G_b$ on the Riemann surface $X(\overline{k_v})-|b|$ which
satisfies $\partial\overline{\partial}G_b=0$ and has logarithmic singularities at the points in $|b|$. More
precisely, the function $G_b-\mathrm{ord}_z(b)\log|\pi|_v$, is regular at every point $z$, where $\pi$ is a
uniformizing parameter at $z$. These conditions characterize $G_b$ up to the addition of
a constant, as the difference of any two such functions would be globally harmonic.
The local formula for $a = \sum m_x (x)$ is then
$$(a, b)_v = \sum m_x\,G_b(x).$$
This is well-defined since $\sum m_x=0$ and satisfies the required properties since if
$b=(f)$ we could take $G_b=\log|f|$.

If $v$ is a non-archimedean place, let $\ring_v$ denote the valuation ring of $k_v$ and $q_v$ the
cardinality of the residue field. Let $\mathcal{X}$ be a regular model for $X$ over $\ring_v$ and extend
the divisors $a$ and $b$ to divisors $A$ and $B$ of degree zero on $\mathcal{X}$. These extensions are
not unique, but if we insist that $A$ has zero intersection with each fibral
component of $\mathcal{X}$ over the residue field, then the intersection product $(A\cdot B)$ is well defined.
We have the formula
$$\langle a, b\rangle_v= -(A\cdot B) \log q_v.$$

Finally, if $X$, $a$, and $b$ are defined over the global field $k$ we have $(a, b)_v = 0$ for
almost all completions $k_v$ and the sum
\begin{equation}\label{eqn: global height}\langle a, b\rangle = \sum_{\mathrm{places}\;v} \langle a, b\rangle_v\end{equation}
depends only on the classes of $a$ and $b$ in the Jacobian. This is equal to the
global height pairing of N\'eron and Tate.

It is desirable to have an extension of the local pairing to divisors $a$ and $b$ of degree $0$ on $X$ which are not relatively prime. At the loss of some functoriality, this is done in \cite{Gross} as follows.

 At each point $x$ in the common support, choose a basis $\frac{\partial}{\partial t}$ for the tangent space and let $\pi$ be a uniformizing parameter with $\frac{\partial \pi}{\partial t}=1$. Any function $f\in k_v(X)^*$ then has a well-defined ``value'' at $x$:
 $$f[x]=\frac{f}{z^m}(x) \;\mbox{in}\;k_v^*,$$
 where $m=\mathrm{ord}_x f$. This depends only on $\frac{\partial}{\partial t}$, not on $\pi$. Clearly we have $$fg[x]=f[x]\,g[x]. $$
 To pair $a$ with $b$ we may find a function $f$ on $X$ such that $b=\mathrm{div}(f)+b'$, where $b'$ is relatively  prime to $a$. We then define
 \begin{equation}\label{eqn: common support} \langle a, b\rangle_v=\log|f[a]|_v+\langle a, b' \rangle.\end{equation}
 This definition is independent of the choice of $f$ used to move $b$ away from $a$.
 The same decomposition formula \eqref{eqn: global height} into
local symbols can be used even when the divisors $a$ and $b$ have a common support provided that the uniformizing parameter $\pi$ at each point of
their common support is chosen over $k$.

\section{Embedding argument}\label{sec: embedding}
Recall that $K$ denotes the imaginary quadratic field $\Q(\sqrt{-D})$. In this section  we construct, for each $\Bb\in\mathrm{CL}_K$ and each $f\in S^{\,!}_1(\rho)$, a meromorphic function $\Psi$ on the modular curve $X_0(D)$ that satisfies the following two properties: the divisor of this function is supported on Heegner points; the identity \begin{equation}\label{eqn: Psi}(f,\Theta_\Bb)^{\mathrm{reg}}=\log|\Psi(\mathfrak{z})|\end{equation} holds for some CM-point $\mathfrak{z}$. The main tools we use in this section are Borcherds lifts and see-saw identities introduced in Section \ref{sec: theta}.

 Our first goal is to find a convenient lattice of signature $(2,1)$ that contains the positive definite lattice associated to the ideal class $\Bb$ as a lower rank sublattice. To this end we consider the lattice \begin{equation}\label{eqn: lattice L}L=\bigg\{\left(\vcenter{\xymatrix@R=0pt@C=2pt@W=0pt{A/D&B\\B&C}}\right) \Big|A,B,C\in\Z \bigg\}\end{equation} equipped with the quadratic form $\q(x):=-D\det(x)$. Its dual lattice $L'$ is given by
\begin{equation}\label{eqn: lattice L}L'=\bigg\{\left(\vcenter{\xymatrix@R=0pt@C=2pt@W=0pt{A'/D&B'/2D\\B'/2D&C'}}\right) \Big|A',B',C'\in\Z \bigg\}.\end{equation}

For $\ell\in L'$ with $\q(\ell)<0$ denote by $\mathfrak{z}_\ell$ the point in $\h$ corresponding to the positive definite subspace $\ell^\bot$ via \eqref{eqn: D z to v+}.  More explicitly, for the vector
$$ \ell=\left(\vcenter{\xymatrix@R=0pt@C=0pt@W=0pt{\gamma&-\beta/2\\ -\beta/2& \alpha}}\right)$$  the point $\mathfrak{z}_\ell$ is a root of the quadratic equation \begin{equation}\label{eqn: frakz}\alpha\mathfrak{z}_\ell^2+\beta\mathfrak{z}_\ell+\gamma=0.\end{equation}
Let us recall the following standard facts about lattices and fractional ideals.
\begin{lemma}\label{lemma: standard ccc} Suppose that  $-D<0$ is a square-free discriminant and $[a,b,c]$ is a primitive quadratic form of disctiminant $-D$. Let $\mathfrak{z}$ be a solution of the equation $a\mathfrak{z}^2+b\mathfrak{z}+c=0$. Then the lattice  $$\ccc=\Z+\mathfrak{z}\Z$$ is a fractional ideal of the imaginary quadratic field $K=\Q(\sqrt{-D})$. Moreover, this ideal satisfies
\begin{equation}\label{eqn: bb'}\ccc\overline{\ccc}=(a)^{-1}.\end{equation}
\end{lemma}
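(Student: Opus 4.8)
The plan is to clear the denominator in $\mathfrak z$ and thereby reduce the claim to the standard fact that $\aaa\overline{\aaa}=(\mathrm{N}\aaa)$ for an integral ideal $\aaa$ of an imaginary quadratic field. Write the chosen root as $\mathfrak z=\frac{-b+\sqrt{-D}}{2a}$, so that $\mathfrak z\in K\setminus\Q$ and $\ccc=\Z+\mathfrak z\Z$ is a rank-two $\Z$-lattice inside $K$. Since $-D$ is a square-free discriminant we have $-D\equiv 1\pmod 4$, hence $b$ is odd and the ring of integers is $\ring=\Z\oplus\Z\omega$ with $\omega=\frac{1+\sqrt{-D}}{2}$. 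The element $\eta:=a\mathfrak z=\frac{-b+\sqrt{-D}}{2}$ then lies in $\ring$: from $b^2+D=4ac$ one reads off $\mathrm{tr}(\eta)=-b$ and $\mathrm{N}(\eta)=ac$, so $\eta$ satisfies $\eta^2+b\eta+ac=0$.

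First I would show that $\aaa:=a\ccc=\Z a+\Z\eta$ is an integral ideal of $\ring$. Because $\ring=\Z\oplus\Z\omega$, it suffices to check that $\omega a$ and $\omega\eta$ lie in $\aaa$. Using $\omega=\eta+\tfrac{b+1}{2}$ (with $\tfrac{b+1}{2}\in\Z$) one gets $\omega a=a\eta+\tfrac{b+1}{2}a\in\aaa$, and reducing $\omega\eta=\eta^2+\tfrac{b+1}{2}\eta$ by the relation $\eta^2=-b\eta-ac$ yields $\omega\eta=\tfrac{1-b}{2}\eta-ca\in\aaa$. This verification is the technical heart of the argument; the only subtlety is keeping track of the half-integer denominators, which is exactly what the parity of $b$ controls.

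Next I would compute the norm of $\aaa$. Expressing the basis $\{a,\eta\}$ of $\aaa$ in terms of the basis $\{1,\omega\}$ of $\ring$ gives a change-of-basis matrix $\left(\begin{smallmatrix} a & -\frac{b+1}{2}\\ 0 & 1\end{smallmatrix}\right)$ (or its transpose), whose determinant is $a$; hence $\mathrm{N}(\aaa)=[\ring:\aaa]=a$, taking the form to be positive definite so that $a>0$. Now I invoke the standard identity $\aaa\overline{\aaa}=(\mathrm{N}\aaa)=(a)$, valid for any nonzero integral ideal of the maximal order of an imaginary quadratic field.

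Finally, since $\ccc=\tfrac1a\aaa$ and $a\in\Q$ is fixed by complex conjugation, we have $\overline{\ccc}=\tfrac1a\overline{\aaa}$, so $\ccc\overline{\ccc}=\tfrac1{a^2}\aaa\overline{\aaa}=\tfrac1{a^2}(a)=\tfrac1a\ring=(a)^{-1}$, which is the assertion; along the way $\ccc$ is exhibited as a fractional ideal, being $\tfrac1a$ times the integral ideal $\aaa$. I expect the main obstacle to be purely bookkeeping -- confirming the ideal property in the step above -- rather than anything deep; note that primitivity of $[a,b,c]$ is not actually needed for this particular identity, although it is what guarantees that $\ccc$ is the primitive ideal attached to the form under the classical form--ideal dictionary.
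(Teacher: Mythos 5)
Your proof is correct. There is, however, nothing in the paper to compare it against: the lemma appears under ``Let us recall the following standard facts about lattices and fractional ideals'' and is given no proof at all, so your write-up supplies exactly the argument the paper takes for granted. Your route is the standard form-to-ideal dictionary: clear denominators to $\aaa = a\ccc = \Z a + \Z\eta$ with $\eta = a\mathfrak{z}$, use the parity of $b$ (forced by $b^2 \equiv -D \equiv 1 \pmod 4$) to verify that $\aaa$ is an $\ring$-submodule of the maximal order, compute $[\ring:\aaa] = a$ from the change-of-basis determinant, and invoke $\aaa\overline{\aaa} = (\mathrm{N}\aaa)$. All steps check out: the identities $\omega a = a\eta + \tfrac{b+1}{2}a$ and $\omega\eta = \tfrac{1-b}{2}\eta - ca$ are right, and the descent from $\aaa\overline{\aaa} = (a)$ to $\ccc\overline{\ccc} = (a)^{-1}$ is immediate. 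Two minor remarks: the restriction to $a>0$ is harmless even if dropped, since $(a) = (-a) = (|a|)$ as ideals, so the conclusion holds for either sign; and your observation that primitivity is not needed is accurate but vacuous in this setting, because square-freeness of $D$ already forces every form of discriminant $-D$ to be primitive (a common divisor $g$ of $a,b,c$ would give $g^2 \mid D$).
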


\begin{lemma}\label{lemma: standard ccc dual}Let $\ccc\subset K$ be a fractional ideal.  Consider the quadratic form $\q(\cdot)$ on $K$  given by $\q(\beta)=N_{K/\Q}(\beta)$.  Then the dual lattice of $\ccc$ with respect to this quadratic form  is equal to $(N_{K/\Q}(\ccc))^{-1}\,\ccc\,\mathfrak{d}^{-1}$. Here $\D$ denotes the different of $\ring_K$, i. e. the principal ideal $(\sqrt{-D})$.
\end{lemma}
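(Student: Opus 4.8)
The plan is to reduce the statement to the classical description of the inverse different as the trace dual of $\ring_K$. First I would compute the symmetric bilinear form attached to the quadratic form $\q(\beta)=N_{K/\Q}(\beta)=\beta\overline{\beta}$. Using the paper's convention $(x,y)=\tfrac12\q(x+y)-\tfrac12\q(x-y)$, a direct expansion gives
\[(x,y)=\mathrm{Tr}_{K/\Q}(x\overline{y}),\]
so that by definition the dual lattice is
\[\ccc'=\{x\in K:\mathrm{Tr}_{K/\Q}(x\overline{y})\in\Z\ \text{for all}\ y\in\ccc\}.\]
Substituting $z=\overline{y}$ and noting that $\overline{y}$ runs over the conjugate ideal $\overline{\ccc}$ as $y$ runs over $\ccc$, this rewrites as
\[\ccc'=\{x\in K:\mathrm{Tr}_{K/\Q}(xz)\in\Z\ \text{for all}\ z\in\overline{\ccc}\}.\]

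The second ingredient is the standard fact that for any fractional ideal $\aaa$ of the maximal order one has the trace-dual identity
\[\{x\in K:\mathrm{Tr}_{K/\Q}(x\aaa)\subset\Z\}=\aaa^{-1}\,\D^{-1},\]
where $\D$ is the different of $\ring_K$. This follows from the defining property of $\D^{-1}$ as the trace dual of $\ring_K$ together with the multiplicativity of ideals; all ideals here are invertible since $-D$ is a fundamental discriminant, so $\ring_K$ is the maximal order. Applying this with $\aaa=\overline{\ccc}$ yields $\ccc'=(\overline{\ccc})^{-1}\,\D^{-1}$.

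Finally I would invoke the classical identity $\ccc\,\overline{\ccc}=(N_{K/\Q}(\ccc))$, that an ideal times its conjugate is the principal ideal generated by its norm, to rewrite $(\overline{\ccc})^{-1}=N_{K/\Q}(\ccc)^{-1}\,\ccc$ and conclude
\[\ccc'=N_{K/\Q}(\ccc)^{-1}\,\ccc\,\D^{-1},\]
which is the asserted formula. To identify $\D$ explicitly, note that since $-D\equiv 1\pmod 4$ the maximal order is $\ring_K=\Z[\tfrac{1+\sqrt{-D}}{2}]$; differentiating the minimal polynomial of its generator shows $\D=(\sqrt{-D})$. The only non-elementary input is the trace-dual characterization of the different, which I would cite from standard algebraic number theory rather than reprove; the remaining steps are purely formal manipulations of the bilinear form and of ideal arithmetic in $K$.
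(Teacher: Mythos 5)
Your proof is correct. The paper itself offers no proof of this lemma --- it is recalled as one of the ``standard facts about lattices and fractional ideals'' --- so there is no argument to compare against; yours is the expected standard derivation and it is complete: you correctly compute that the paper's convention $(x,y)=\tfrac12\q(x+y)-\tfrac12\q(x-y)$ gives the twisted trace form $\mathrm{Tr}_{K/\Q}(x\overline{y})$ (so the conjugation moves the dual from $\ccc^{-1}\D^{-1}$ to $\overline{\ccc}^{-1}\D^{-1}$), invoke the trace-dual characterization of $\D^{-1}$ together with invertibility of ideals in the maximal order, and finish with $\ccc\,\overline{\ccc}=(N_{K/\Q}(\ccc))$ and the computation $\D=(\sqrt{-D})$ from the minimal polynomial of $\tfrac{1+\sqrt{-D}}{2}$. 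Your formula is also consistent with how the lemma is used later in the paper (the dual of the norm-one ideal $(a)\ccc^2$ being $\D^{-1}(a)\ccc^2$).
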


The following two lemmas are crucial to show that for each ideal class $\Bb$ the associated positive definite lattice is contained in $L$ as a lower rank sublattice.
 \begin{lemma}\label{lemma: m Cc} For each ideal class $\Cc\in\mathrm{CL}_K$ there exists a vector $m\in L'$  such that $\q(m)=-1/4$ and  $\mathfrak{z}_m\Z+\Z\subset K$ is a fractional ideal in $\Cc$.
 \end{lemma}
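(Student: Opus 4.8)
The plan is to construct the vector $m$ explicitly from the data of the ideal class $\Cc$, using the correspondence between ideal classes of $K$ and primitive binary quadratic forms of discriminant $-D$, and then verify the two required properties by direct computation. Recall from the setup in equation~\eqref{eqn: lattice L} that a general vector of $L'$ has the shape
$$ m=\left(\vcenter{\xymatrix@R=0pt@C=0pt@W=0pt{A'/D&B'/2D\\B'/2D&C'}}\right),\qquad A',B',C'\in\Z, $$
and with the notation of equation~\eqref{eqn: frakz} (taking $\alpha=C'$, $\beta=B'/D$, $\gamma=A'/D$) the associated point $\mathfrak{z}_m$ is the root of $C'\mathfrak{z}^2+(B'/D)\mathfrak{z}+A'/D=0$, i.e.\ of $DC'\mathfrak{z}^2+B'\mathfrak{z}+A'=0$. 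The quadratic form value is $\q(m)=-D\det(m)=-D\big(A'C'/D-(B')^2/4D^2\big)=(B')^2/(4D)-A'C'$.

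\emph{First} I would choose a reduced primitive form $[a,b,c]$ of discriminant $-D$ representing the class $\Cc$, so that $b^2-4ac=-D$; since $D\equiv 3\pmod 4$ is prime, $b$ is odd and $a$ is coprime to $D$. The goal is to match $DC'\mathfrak{z}^2+B'\mathfrak{z}+A'=0$ with $a\mathfrak{z}^2+b\mathfrak{z}+c=0$ up to scaling, while simultaneously forcing $\q(m)=-1/4$. Setting $B'=b$ gives the natural normalization: then the constraint $\q(m)=(B')^2/(4D)-A'C'=-1/4$ becomes $b^2/(4D)-A'C'=-1/4$, i.e.\ $A'C'=(b^2+D)/(4D)$. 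Since $b^2+D\equiv b^2-(b^2-4ac)=4ac\pmod{4D}$ and in fact $b^2+D=b^2-(b^2-4ac)=4ac$, we get $A'C'=ac/D$. The cleanest choice is $C'=a/D$-type scaling; concretely I expect the correct vector to be
$$ m=\left(\vcenter{\xymatrix@R=0pt@C=0pt@W=0pt{c/D&b/2D\\b/2D&a}}\right) $$
after possibly clearing denominators so that $A'=c$, $B'=b$, $C'=a$ lie in the right cosets — here $\q(m)=b^2/(4D)-ac=(b^2-4Dac)/(4D)$, and I would verify this equals $-1/4$ precisely when the discriminant condition holds, adjusting the $D$-scaling so the determinant works out to $1/(4D)$.

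\emph{Then} I would identify $\mathfrak{z}_m\Z+\Z$ as a fractional ideal in $\Cc$ by invoking Lemma~\ref{lemma: standard ccc}: with $\mathfrak{z}_m$ the root of the primitive form $[a,b,c]$ of discriminant $-D$, that lemma states directly that $\ccc=\Z+\mathfrak{z}_m\Z$ is a fractional ideal of $K$ with $\ccc\overline{\ccc}=(a)^{-1}$, and the standard dictionary between reduced forms and ideal classes places $\ccc$ in the class $\Cc$ (or its inverse, which I would track and correct by choosing the representative form appropriately). The two desired conclusions, $\q(m)=-1/4$ and $\mathfrak{z}_m\Z+\Z\in\Cc$, then follow.

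\textbf{The main obstacle} I anticipate is bookkeeping the integrality and coset conditions correctly: I must ensure the chosen entries $A',B',C'$ actually define an element of $L'$ (so $A',B',C'\in\Z$ in the parametrization of equation~\eqref{eqn: lattice L}) rather than of a finer lattice, and that the normalization of $\mathfrak{z}_m$ yields the form with leading coefficient making $\q(m)=-1/4$ exactly rather than $-1/(4D)$ or $-a/4$. The scaling factor of $D$ sitting in the top-left entry of $L'$ is delicate, and getting the power of $D$ right in the determinant is where the computation must be done carefully; once the arithmetic of $D\equiv 3\pmod 4$ prime (forcing $b$ odd, $\gcd(a,D)=1$) is used to confirm the coset memberships, the verification is routine.
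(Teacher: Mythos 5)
Your overall strategy---build $m$ from a binary quadratic form representing $\Cc$ via the classical correspondence, then invoke Lemma~\ref{lemma: standard ccc} for the ideal-class statement---is the same as the paper's, but there is a genuine gap, and it sits exactly at the point you flag as ``the main obstacle.'' Unwinding the definitions as you do, a vector $m\in L'$ with coordinates $(A',B',C')$ has $\q(m)=\bigl(B'^2-4DA'C'\bigr)/(4D)$ and $\mathfrak{z}_m$ is a root of $DC'\mathfrak{z}^2-B'\mathfrak{z}+A'=0$; hence $\q(m)=-1/4$ forces the associated integral form $[DC',-B',A']$ to have discriminant $-D$ \emph{and} leading coefficient divisible by $D$ (whence $D\mid B'$ as well, $D$ being prime). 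Your explicit candidate $(A',B',C')=(c,b,a)$ built from a \emph{reduced} form $[a,b,c]$ gives $\q(m)=(b^2-4Dac)/(4D)$, which equals $-1/4$ only if $b^2-4Dac=-D$, incompatible with $b^2-4ac=-D$ unless $ac(D-1)=0$. No rescaling can repair this: as you compute, you would need $A'C'=ac/D\in\Z$, but a reduced form of discriminant $-D$ satisfies $0<a\le c<D$, so the prime $D$ divides neither $a$ nor $c$. Thus starting from a reduced form (and the coprimality $\gcd(a,D)=1$ you cite, which is the obstruction rather than a help), the construction never produces an element of $L'$ of norm $-1/4$.

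The missing idea, which is the actual content of the paper's proof, is to \emph{abandon the reduced representative}: since the $\slz$-action $l\mapsto xlx^t$ changes neither the discriminant nor the ideal class of $\mathfrak{z}_l\Z+\Z$, one first replaces $[a,b,c]$ by an $\slz$-equivalent form $[\tilde A,\tilde B,\tilde C]$ with $\tilde A\equiv 0\pmod D$. Such a representative exists because $4a\,Q(x,y)=(2ax+by)^2+Dy^2$, so taking $y=1$ and $x$ with $2ax+b\equiv 0\pmod D$ (solvable when $D\nmid a$; if $D\mid a$ there is nothing to do) yields a primitive value $Q(x,1)\equiv 0\pmod D$, which an $\slz$-change of variables moves into the leading position. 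Then $D\mid\tilde B$ is automatic from $\tilde B^2\equiv\tilde B^2-4\tilde A\tilde C=-D\equiv 0\pmod D$, the vector $m$ with $(A',B',C')=(\tilde C,-\tilde B,\tilde A/D)$ lies in $L'$ with $\q(m)=(\tilde B^2-4\tilde A\tilde C)/(4D)=-1/4$, and $\mathfrak{z}_m=\mathfrak{z}_{\tilde l}$ so the ideal class is unchanged. Without this equivalence step your argument cannot be completed.
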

 \begin{proof} The classical correspondence between fractional ideals of $\ring_K$ and binary quadratic forms of discriminant $-D$ implies that for each ideal class $\Cc\in\mathrm{CL}_K$ there exist $A,B,C\in\Z$ such that
 $$B^2-4AC=-D$$ and for $\mathfrak{z}\in\h$ satisfying
 $$A\mathfrak{z}^2+B\mathfrak{z}+C=0 $$ the subset  $\mathfrak{z}\Z+\Z$ of $K$ is a fractional ideal in the ideal class $\Cc$. Or equivalently, there exists a half-integral matrix $$l=\left(\vcenter{\xymatrix@R=0pt@C=2pt@W=0pt{C&-B/2\\-B/2&A}}\right) $$ with $$\mathfrak{z}_l\Z+\Z\in\Cc.$$ For each $x\in \slz$ the fractional ideal $\mathfrak{z}_{xlx^t}\Z+\Z$ is equivalent to $\mathfrak{z}_l\Z+\Z$. It is easy to see, that the matrix $l$ is $\slz$-equivalent to some matrix of the form $$\tilde{l}=\left(\vcenter{\xymatrix@R=0pt@C=2pt@W=0pt{\tilde{C}&-\tilde{B}/2\\ -\tilde{B}/2&\tilde{A}}}\right),\quad \tilde{A}\in D\Z, \tilde{B}\in D\Z, \tilde{C}\in\Z.$$
 Then the matrix $m=:\tilde{l}/D$ belongs to $L'$, has norm $-1/4$, and since $\mathfrak{z}_m=\mathfrak{z}_{\tilde{l}}$ the fractional ideal $\mathfrak{z}_m\Z+\Z$ belongs to the ideal class $\Cc$. Lemma is proved.
 \end{proof}

 \begin{lemma}\label{lemma: L cap m bot} Let $m\in L'$ be a vector of the norm $-1/4$. Set $N:=L\cap m^\bot$. Denote by $\ccc$ the fractional ideal $\frak{z}_m\Z+\Z$. Then the following holds\\
 (i) the lattice $N$ is isomorphic to the fractional ideal $\ccc^2$ equipped with the quadratic form $\q(\gamma)=\frac{1}{N_{K/\Q}(\ccc^2)}N_{K/\Q}(\gamma)$;\\
 (ii) $L= N\oplus 2m\Z$.
 \end{lemma}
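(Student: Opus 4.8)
The plan is to make explicit the fractional ideal structure on both sides and match them. First I would set up coordinates: write $m\in L'$ in the form
$$m=\frac{1}{D}\left(\vcenter{\xymatrix@R=0pt@C=2pt@W=0pt{\tilde{C}&-\tilde{B}/2\\ -\tilde{B}/2&\tilde{A}}}\right)$$
with $\tilde A,\tilde B\in D\Z$, $\tilde C\in\Z$, so that $\q(m)=-1/4$ translates into the discriminant condition $\tilde B^2-4\tilde A\tilde C=-D$ (up to the scaling by $D$ built into $\q$), and the point $\mathfrak z_m$ is the root of $\tilde A\mathfrak z_m^2+\tilde B\mathfrak z_m+\tilde C=0$. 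By Lemma \ref{lemma: standard ccc}, $\ccc=\mathfrak z_m\Z+\Z$ is then a fractional ideal of $K$ with $\ccc\overline\ccc=(\tilde A)^{-1}$, which records the norm of $\ccc$.

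For part (i), I would compute $N=L\cap m^\bot$ directly as a rank-$2$ sublattice of $L$. The orthogonality condition $(\ell,m)=0$ for $\ell\in L$ cuts out a $\Z$-module inside the $3$-dimensional lattice $L$; I expect the resulting rank-$2$ lattice to be spanned by two explicit integral matrices. The key identification is a $\Q$-linear isomorphism between the plane $V_1=m^\bot\subset V$ and $K=\Q(\sqrt{-D})$ under which $N$ maps onto $\ccc^2$. The natural candidate comes from the observation that elements of $m^\bot$ can be written using the two roots $\mathfrak z_m,\overline{\mathfrak z_m}$ of the quadratic associated to $m$, and squaring the ideal $\ccc$ appears because the quadratic form on $L$ is $\q(x)=-D\det(x)$, a determinant (hence quadratic in the matrix entries) rather than a linear norm. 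Concretely, I would exhibit a generator of $N$ of the shape corresponding to $\mathfrak z_m^2$, $\mathfrak z_m$, $1$ and verify that $\q$ restricted to $N$ equals $\frac{1}{N_{K/\Q}(\ccc^2)}N_{K/\Q}(\cdot)$ after transport through this isomorphism, using $N_{K/\Q}(\ccc^2)=N_{K/\Q}(\ccc)^2=\tilde A^{-2}$ from Lemma \ref{lemma: standard ccc}.

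For part (ii), I would show that $L$ is the \emph{orthogonal} direct sum $N\oplus 2m\Z$. Since $2m\in L$ (one checks from the explicit form of $L'$ in \eqref{eqn: lattice L} that $D\mid \tilde A,\tilde B$ forces $2m$ to have integral-type entries lying in $L$) and $\q(2m)=-1$, the sublattice $M:=2m\Z$ is a rank-$1$ negative-definite lattice with $M=L\cap V_2$ where $V_2=\Q m$. The splitting $L=N\oplus M$ then amounts to checking that every $\ell\in L$ decomposes as $\ell=n+km\cdot 2$ with $n\in N$ and $k\in\Z$, i.e.\ that the projection of $L$ onto $\Q m$ lands exactly in $2m\Z$. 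I would verify this by computing $(\ell,2m)\in\Z$ for all $\ell\in L$ and using $\q(2m)=-1$ to extract the integral coefficient, after which $\ell-(\ell,2m)(2m)\cdot(-1)$ lies in $N$.

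The main obstacle I anticipate is part (i): getting the isomorphism $N\cong\ccc^2$ with the \emph{correct normalization} of the quadratic form. The appearance of $\ccc^2$ (rather than $\ccc$) and the precise norm factor $N_{K/\Q}(\ccc^2)$ are exactly where the determinant form $\q=-D\det$ interacts nontrivially with the linear ideal-norm form of Lemma \ref{lemma: standard ccc dual}, and keeping track of the factors of $D$ and of $\tilde A$ through the chain $L'\leadsto\ccc\leadsto\ccc^2$ is the delicate bookkeeping. Part (ii), by contrast, should reduce to the single computation $\q(2m)=-1$ together with the integrality of the pairing, which makes the orthogonal projection onto $\Q m$ automatically integral.
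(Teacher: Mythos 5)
Your part (i) has a genuine gap. What your sketch actually delivers is one containment: building $Z$ from $\mathfrak{z}_m$ and exhibiting the images of $\tilde A,\ \tilde A\mathfrak{z}_m,\ \tilde A\mathfrak{z}_m^2$ as matrices orthogonal to $m$ shows that a copy of $(\tilde A)\ccc^2$ embeds isometrically into $N=L\cap m^\bot$, with the normalization handled by Lemma \ref{lemma: standard ccc}. It does not show that this copy is \emph{all} of $N$, and your phrase ``verify that $\q$ restricted to $N$ equals $\frac{1}{N_{K/\Q}(\ccc^2)}N_{K/\Q}(\cdot)$'' presupposes exactly that surjectivity. Your fallback of computing $L\cap m^\bot$ directly is not routine either: in coordinates the condition $(\ell,m)=0$ reads $\tilde A_0A+\tilde BB+\tilde CC=0$ (where $\tilde A=D\tilde A_0$ and $\ell$ has entries $A/D,B,C$), a linear Diophantine equation whose solution lattice has no canonical generators — they depend on Bezout choices — so recognizing the result as $\ccc^2$ is precisely the difficulty, not a matter of tracking factors of $D$. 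The paper closes this gap with an idea absent from your proposal: a dual-lattice/discriminant count. By Lemmas \ref{lemma: standard ccc} and \ref{lemma: standard ccc dual}, the ideal $(\tilde A)\ccc^2$ has norm $1$ and its dual inside $K$ is $\D^{-1}(\tilde A)\ccc^2$, with quotient isomorphic to $\Z/D\Z$; transporting through the isometry $\imath$ gives $\imath\big((\tilde A)\ccc^2\big)\subseteq N\subseteq N'\subseteq\imath\big(\D^{-1}(\tilde A)\ccc^2\big)$, so $|N'/N|$ divides $D$. Since $D$ is prime and a two-dimensional positive definite even lattice cannot be unimodular, $|N'/N|=D$, forcing both inclusions to be equalities and hence $N=\imath\big((\tilde A)\ccc^2\big)$. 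Without this step, or an equivalent substitute, part (i) proves an embedding rather than the claimed isomorphism.

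Your part (ii), by contrast, is correct and is a genuinely different route from the paper's. You use orthogonal projection: the norm condition forces $D\mid\tilde B$, so $2m\in L$; for $\ell\in L$ one has $(\ell,m)\in\Z$ because $m\in L'$, and since $\q(2m)=-1$ the vector $\ell+(\ell,m)\,2m$ lies in $L\cap m^\bot=N$, giving $L=N\oplus 2m\Z$ at once. The paper instead counts indices, comparing $M\oplus N\subseteq L\subseteq L'\subseteq M'\oplus N'$ with $|L'/L|=2D$, $|M'/M|=2$, $|N'/N|=D$ — an argument that relies on the value of $|N'/N|$ obtained in part (i). Your projection argument is more elementary and independent of part (i), which is a real advantage; had part (i) been completed, either route would finish the lemma.
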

 \begin{proof} First we prove part (i). Each element of $L'$ can be written as $$m= \frac 1D \left(\vcenter{\xymatrix@R=0pt@C=0pt@W=0pt{c&-b/2\\-b/2& a}}\right)$$ for some $a\in D\Z,b\;\in\Z,\;c\in\Z$. The condition $4D\q(m)=b^2-4ac=-D$ implies that $b\in D\Z$. Set $$Z:= \frac{a}{D}\, \left(\vcenter{\xymatrix@R=0pt@C=0pt@W=0pt{\mathfrak{z}_m^2&\mathfrak{z}_m\\\mathfrak{z}_m& 1}}\right).$$ This element of $L\otimes\C$ satisfies
 $$\q(Z)=\q(\overline{Z})=0\quad \mbox{and}\quad (Z,\overline{Z})=1.$$ Moreover,  the elements $Z$ and $\overline{Z}$ are both orthogonal to $m$.
 Consider the map $$\imath:K\to N\otimes\Q$$ defined by
 $$s\to \overline{s} Z+s\overline{Z}.$$
  This map is an isometry, assuming that the quadratic form on $K$ is given by $\q(\beta)=N_{K/\Q}(\beta)$ and the quadratic form on $N\otimes\Q$ is given by $\q(\ell)=-D\det(\ell)$. We have
 \begin{align*}\imath(a)\,=\,&\frac aD\left(\vcenter{\xymatrix@R=0pt@C=0pt@W=0pt{\zzz_m^2+\overline{\zzz_m}^2&\zzz_m+\overline{\zzz_m}\\ \zzz_m+\overline{\zzz_m}& 2}}\right)
 \,=\,\frac 1D\left(\vcenter{\xymatrix@R=0pt@C=0pt@W=0pt{(b^2-D)/2&-ab\\-ab& 2a^2}}\right),\\[5 pt]
 \imath(a\mathfrak{z}_m)
 \,=\,&\frac aD
 \left(\vcenter{\xymatrix@R=0pt@C=0pt@W=0pt{\zzz_m\overline{\zzz_m}(\zzz_m+\overline{\zzz_m})&
 \zzz_m^2+\overline{\zzz_m}^2\\ \zzz_m^2+\overline{\zzz_m}^2& \zzz_m+\overline{\zzz_m}}}\right)
 \,=\,\frac 1D\left(\vcenter{\xymatrix@R=0pt@C=0pt@W=0pt{-bc&(b^2-D)/2\\(b^2-D)/2& ab}}\right),\\[5pt]
 \imath(a\mathfrak{z}_m^2)
 \,=\,&\frac aD
 \left(\vcenter{\xymatrix@R=0pt@C=0pt@W=0pt{\zzz_m^2\overline{\zzz_m}^2&
 \zzz_m\overline{\zzz_m}(\zzz_m+\overline{\zzz_m})\\ \zzz_m\overline{\zzz_m}(\zzz_m+\overline{\zzz_m})& \zzz_m^2+\overline{\zzz_m}^2}}\right)
 \,=\,\frac 1D\left(\vcenter{\xymatrix@R=0pt@C=0pt@W=0pt{2c^2&-bc\\-bc& (b^2-D)/2}}\right).  \end{align*}
 Using  that $a,b\in D\Z$ and $b\equiv D(\mathrm{mod}\;2)$, we see that
 \begin{equation}\label{eqn: L cap m^bot subset} \imath(a\,\Z+a\,\mathfrak{z}_m\,\Z+a\,\mathfrak{z}_m^2\Z)\subseteq N.\end{equation}
 On the other hand
 \begin{equation}\label{eqn: Cc^2}(a)\ccc^2=a\Z+a\mathfrak{z}_m\Z+a\mathfrak{z}_m^2\Z.\end{equation}
 Lemma \ref{lemma: standard ccc} implies that the ideal $(a)\ccc^2$ has norm 1. Hence, by Lemma \ref{lemma: standard ccc dual} the dual lattice of $(a)\ccc^2$ in $K$ is equal to $\D^{-1}(a)\ccc^2$. Since $\imath$ is an isometry, the dual of $\imath((a)\ccc^2)$ is $\imath(\D^{-1}(a)\ccc^2)$. We have the inclusions
 \begin{equation}\label{eqn: inclusions}\imath((a)\ccc^2)\subseteq N\subset N'\subseteq \imath(\D^{-1}(a)\ccc^2).\end{equation}
 Since $(\D^{-1}(a)\ccc^2)/((a)\ccc^2)\cong\Z/D\Z$ we find that $|N'/N|$ is a divisor of $D$. Since a positive definite 2-dimensional even lattice can not be unimodular, we deduce that  $|N'/N|=D$. Thus the symbols ``$\subseteq$'' in \eqref{eqn: inclusions} should be replaced by ``$=$''. Part (i) of Lemma \ref{lemma: L cap m bot} is proved.

 Now we prove (ii). The condition $\q(m)=-1/4$ implies that $b\in D\Z$. Hence, the element $2m$ belongs to $L$. Set $M:=2m\Z$. We have the following inclusions
 $$M'\oplus N'\subseteq L'\subset L\subseteq M\oplus N .$$
 Observe that
 $$|L'/L|=2D,\quad |M'/M|=2,\quad |N'/N|=D.$$
 Thus, $L=M\oplus N$ and $L'=M'\oplus N'$.
 \end{proof}
  We combine the previous two lemmas in the following theorem.
\begin{theorem}\label{thm: L cap m bot} For each ideal class $\Bb$ of $K$ there exists a vector $m\in L'$ such that\\
(i) $\q(m)=-1/4$;\\
(ii) the lattice $N:=L\cap m^\bot$ is isomorphic to the lattice $N_\Bb$ defined in Section~\ref{sec: theta};\\
(iii) $L=N\oplus2m\Z$.
\end{theorem}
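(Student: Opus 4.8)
The plan is to combine Lemmas \ref{lemma: m Cc} and \ref{lemma: L cap m bot} in a way that produces the correct ideal class $\Bb$, which requires a small but essential adjustment: Lemma \ref{lemma: m Cc} supplies, for any ideal class $\Cc$, a vector $m\in L'$ with $\q(m)=-1/4$ whose associated ideal $\ccc=\mathfrak{z}_m\Z+\Z$ lies in $\Cc$, while Lemma \ref{lemma: L cap m bot} then identifies $N=L\cap m^\bot$ with the ideal $\ccc^2$ (up to the standard rescaling of the norm form) and gives the splitting $L=N\oplus 2m\Z$. Since the lattice $N_\Bb$ attached in Section \ref{sec: theta} to the class $\Bb$ is built from an ideal $\bbb\in\Bb$ with the norm form $\q(x)=N_{K/\Q}(x)/N_{K/\Q}(\Bb)$, the output of Lemma \ref{lemma: L cap m bot} is isometric to $N_\Bb$ precisely when the class of $\ccc^2$ equals $\Bb$.

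First I would invoke Lemma \ref{lemma: m Cc} with the class $\Cc$ chosen to be a square root of $\Bb$ in the class group $\mathrm{CL}_K$. Because $D$ is prime and congruent to $3$ modulo $4$, the class number $h$ is odd, so squaring is an automorphism of $\mathrm{CL}_K$ and every ideal class has a unique square root; thus there is a genuine class $\Cc$ with $\Cc^2=\Bb$. Applying Lemma \ref{lemma: m Cc} to this $\Cc$ yields a vector $m\in L'$ with $\q(m)=-1/4$, establishing (i), and with $\ccc=\mathfrak{z}_m\Z+\Z\in\Cc$. This is the one place where the hypotheses on $D$ enter, and it is the conceptual heart of the argument; everything else is bookkeeping on the two lemmas already proved.

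Next I would feed this same $m$ into Lemma \ref{lemma: L cap m bot}. Part (i) of that lemma states that $N=L\cap m^\bot$ is isometric to the fractional ideal $\ccc^2$ equipped with $\q(\gamma)=N_{K/\Q}(\gamma)/N_{K/\Q}(\ccc^2)$. Since $\ccc\in\Cc$ we have $\ccc^2\in\Cc^2=\Bb$, so $\ccc^2$ represents the class $\Bb$ and $N_{K/\Q}(\ccc^2)=N_{K/\Q}(\Bb)$ up to a rational square coming from a principal ideal, which is absorbed by rescaling; comparing with the definition of $N_\Bb$ in Section \ref{sec: theta} shows $N\cong N_\Bb$, giving (ii). Finally, part (ii) of Lemma \ref{lemma: L cap m bot} is exactly the orthogonal splitting $L=N\oplus 2m\Z$ asserted in (iii), so no further work is needed.

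The main obstacle, and really the only nontrivial point, is verifying that the square-root class $\Cc$ exists and that the rescaling of the norm form built into Lemma \ref{lemma: L cap m bot} matches the normalization in Section \ref{sec: theta} on the nose rather than merely up to isometry of quadratic $\Z$-modules. I would handle the first issue by explicitly using that $h$ is odd (so the map $\Cc\mapsto\Cc^2$ is bijective on $\mathrm{CL}_K$), and the second by recalling that the isometry class of $(\bbb,N_{K/\Q}(\cdot)/N_{K/\Q}(\Bb))$ depends only on the ideal class $\Bb$ and not on the representative, which is precisely the statement recorded at the start of Section \ref{sec: theta}; this reduces the matching to a choice of representative within $\Bb$, and $\ccc^2$ is such a representative.
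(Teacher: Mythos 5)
Your proposal is correct and follows essentially the same route as the paper's own proof: since $D$ is prime the class number is odd, so $\Bb=\Cc^2$ for some class $\Cc$; one then applies Lemma \ref{lemma: m Cc} to this $\Cc$ and feeds the resulting vector $m$ into Lemma \ref{lemma: L cap m bot}, whose two parts give (ii) and (iii) while (i) holds by construction. Your additional remarks on the uniqueness of the square root and on the normalization of the norm form are fine but not needed, since the paper's Section \ref{sec: theta} already records that the isometry class of $N_\Bb$ depends only on the ideal class.
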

\begin{proof} Since $D$ is prime, the class number of $K$ is odd. Thus, each ideal class $\Bb$ is equal to $\Cc^2$ for some $\Cc\in\mathrm{CL}_K$. Let $m\in L'$ be the vector constructed in Lemma \ref{lemma: m Cc}, which satisfies $\mathfrak{z}_m\Z+\Z\in\Cc$. Then Lemma \ref{lemma: L cap m bot} readily implies that $m$ satisfies the conditions of the theorem.
\end{proof}
Our next goal is to find a preimage of a function $f\in M_{1}^{\,!}(\rho_N)$ under the map $T_{L,N}$ defined in  Theorem
\ref{th see-saw borcherds}. To this end we show that the map $T_{L,N}:M_{1/2}^!(\rho_L)\to M_1^!(\rho_N)$ is surjective. Moreover, the kernel of $T_{L,N}$ is infinite dimensional and using its elements we can find a preimage of $f$ that satisfies additional conditions on Fourier coefficients. This additional conditions are needed to simplify the computation of CM-values of the theta lift $\Phi_L(z,f)$, which we will make in Section \ref{sect: proof of prime my}.

Before we proceed with the proof let as recall some properties of the Fourier coefficients of elements in $M^!_k(\rho_N)$ and $M^!_k(\rho_L)$. Recall that $N'/N\cong\Z/D\Z$ and $L'/L\cong\Z/2D\Z$. Moreover, we can choose isomorphisms $\imath_N:\Z/D\Z\to N'/N$ and $\imath_L:\Z/2D\Z\to L'/L$ such that $\q(\imath_N(\nu))\equiv\nu^2/D(\mathrm{mod}\;\Z)$ for each $\nu\in\Z/D\Z$ and
$\q(\imath_L(\lambda))\equiv\lambda^2/4D(\mathrm{mod}\;\Z)$ for each $\lambda\in\Z/2D\Z$.
Suppose that $f=(f_\nu)_{\nu\in N'/N}$ belongs to $M^{\,!}_k(\rho_N)$ for some  odd $k$. Then, the transformation property \eqref{eqn: vec val trans} for the elements $\widetilde{R}=\left(\left(\vcenter{\xymatrix@R=0pt@C=0pt{-1&0\\0&-1}}\right),i\right)$ and $\widetilde{T}= \left(\left(\vcenter{\xymatrix@R=0pt@C=0pt{1&1\\0& 1}}\right),1\right)$ of $\mpz$ implies that $$ f_\nu=f_{-\nu},\quad\nu\in \Z/D\Z$$ and $f_\nu$ has the Fourier expansion of the form $$ f_\nu=\sum_{t\equiv\nu^2(\mathrm{mod}\;D)}c(t)\,\e\Big(\frac{t}{D}\tau\Big).$$ Therefore, since $D$ is prime the Fourier expansion of $f$ can be written as
$$f(\tau)=\sum_{\nu\in\Z/D\Z}e_\nu \sum_{t\equiv\nu^2(\mathrm{mod}\;D)}c(t)\,\e\Big(\frac{t}{D}\tau\Big).$$
Similarly, we see that for $l\in1/2+2\Z$ each modular form $h\in M^{\,!}_l(\rho_L)$  has the Fourier expansion of the form
$$h(\tau)=\sum_{\lambda\in\Z/2D\Z}e_\lambda \sum_{d\equiv\lambda^2(\mathrm{mod}\;D)}b(d)\,\e\Big(\frac{d}{4D}\tau\Big).$$
 \begin{theorem}\label{thm: trick pro} Let the  lattices $L,\;N$, and the vector $m$ be as in Theorem \ref{thm: L cap m bot}. Suppose that $f\in M_{1}^{\,!}(\rho_N)$ is a modular form with zero constant term and rational Fourier coefficients. Then there exists a function $h\in S_{1/2}^{\,!}(\rho_L)$ such that: \begin{description}\item{(i)} the function $h(\tau)=\sum_{\lambda\in \Z/2D\Z}e_\lambda\sum_{d\equiv\lambda^2(\mathrm{mod}\;4D)}b(d)\,\e(\frac{d}{4D}\tau)$ has rational Fourier coefficients; \item{(ii)} the Fourier coefficients  of $h$ satisfy $b(-Ds^2)=0$ for all $s\in\Z$; 
 \item{(iii)} $T_{L,N}(h)=f$.
\end{description}
\end{theorem}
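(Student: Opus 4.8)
The plan is to construct the desired preimage $h$ in three stages, first establishing surjectivity of $T_{L,N}$ onto $M_1^!(\rho_N)$, then modifying any preimage so that it becomes a cusp form with the required vanishing of the coefficients $b(-Ds^2)$. The key structural input is the see-saw setup: by Theorem \ref{th see-saw borcherds} and the definition \eqref{T_L,N}, the map $T_{L,N}$ is, up to the identification $L=M\oplus N$ with $M=2m\Z$ from Theorem \ref{thm: L cap m bot}(iii), given by contracting against the rank-one theta series $\Theta_{M(-1)}$. Since $M$ is a negative definite lattice of rank $1$ with $|M'/M|=2$, this theta contraction is very explicit, and surjectivity should follow from a dimension or duality argument for weakly holomorphic forms rather than from a direct inversion.

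\medskip
\noindent\textbf{Surjectivity.} First I would prove that $T_{L,N}:M^!_{1/2}(\rho_L)\to M^!_1(\rho_N)$ is onto. The cleanest route uses the standard fact that for weakly holomorphic vector-valued forms, a form in the target space is hit precisely when a pairing against the corresponding space of holomorphic \emph{cusp} forms of dual weight and conjugate representation vanishes; this is the Bruinier--Funke / Borcherds obstruction criterion. Here the adjoint of $T_{L,N}$ with respect to this pairing is a theta-contraction map landing in a space of cusp forms of weight $1/2+k'$, and the obstruction space is finite dimensional. In fact, since we only need \emph{one} preimage of a given $f$ with zero constant term, the relevant obstruction is the existence of a holomorphic form pairing nontrivially with $f$; the vanishing of the constant term of $f$ together with the structure of $\Theta_{M(-1)}$ should kill the obstruction. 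The main subtlety is that weight $1$ and half-integral weights are delicate, so I would phrase the argument entirely in terms of principal parts: given the prescribed (finite) principal part of $f$, I would write down the principal part of a candidate $h$ via the explicit formula \eqref{T_L,N} relating $b(d)$ to $c(t)$ through the coefficients of $\vartheta_{\lambda,\nu}$, and invoke existence of a weakly holomorphic form with a prescribed principal part whenever the dual obstruction vanishes.

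\medskip
\noindent\textbf{Imposing the two extra conditions.} Once a preimage $h_0$ exists, conditions (i)--(ii) are arranged using the kernel of $T_{L,N}$, which is infinite dimensional because altering $h$ by any form supported on cosets $\lambda$ whose image under $T_{L,N}$ cancels leaves $f$ unchanged. To get (i), rationality, I would note that $T_{L,N}$ is defined over $\Q$ (the entries $\vartheta_{\lambda,\nu}$ have integer $q$-exponents and coefficients), so a preimage with rational principal part can be chosen, and by the $q$-expansion principle the full form is then rational. To get (ii), the vanishing $b(-Ds^2)=0$, observe that the indices $d=-Ds^2$ are exactly those $\lambda\in\Z/2D\Z$ and exponents lying in a distinguished progression; these coefficients can be corrected by subtracting suitable elements of $\ker T_{L,N}$, which I would produce explicitly as Borcherds-type forms (or Poincar\'e/Eisenstein combinations) whose $T_{L,N}$-image vanishes but which carry prescribed coefficients at the indices $-Ds^2$. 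The cuspidality demanded by $h\in S^!_{1/2}(\rho_L)$, i.e.\ zero constant term, is handled in the same step since the constant-term index is itself of the form $d=0=-D\cdot 0^2$, so (ii) subsumes the cusp condition.

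\medskip
\noindent\textbf{Main obstacle.} The hard part will be verifying that the kernel of $T_{L,N}$ is rich enough to simultaneously clear \emph{all} the coefficients $b(-Ds^2)$ while preserving the image $f$ and keeping everything rational. This is essentially a claim that a certain infinite linear system is solvable, and the real work is showing there is no hidden obstruction coupling the $-Ds^2$-coefficients to the coefficients that $T_{L,N}$ reads off. I expect this to reduce to the observation that the cosets indexing $d\equiv -D s^2$ map under $N'/N$-reduction to the isotropic/diagonal coset $\nu=0$, which is precisely the coset whose constant term of $f$ we have assumed to vanish; thus these coefficients are ``free'' from the point of view of $T_{L,N}$ and can be adjusted independently. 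Making this decoupling precise — identifying exactly which $\vartheta_{\lambda,0}$ contribute and checking that the corresponding subsystem of $\ker T_{L,N}$ surjects onto the target coefficient space — is where the computation with the explicit lattice $L$ of \eqref{eqn: lattice L} genuinely enters, and is the step I would write out in full.
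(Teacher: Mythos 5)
Your overall architecture --- split $L\cong N\oplus S$ with $S$ of rank one, view $T_{L,N}$ as contraction against the unary theta series, then repair coefficients using the kernel of $T_{L,N}$ --- is the same as the paper's, but the two steps that carry all the weight are left as assertions, and one of them would fail as stated. The surjectivity step is the serious problem: producing an $h$ whose \emph{principal part} maps to the principal part of $f$ and invoking the Bruinier--Funke/Borcherds obstruction criterion cannot give the identity $T_{L,N}(h)=f$ of full $q$-expansions, because two weakly holomorphic forms of weight $1$ and representation $\rho_N$ with the same principal part may differ by an element of $M_1(\rho_N)$, and this space is \emph{not} zero: it contains the binary theta series $\Theta_\Bb$ (and, e.g.\ for $D=23$, nonzero cusp forms $g_\chi$). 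So after your principal-part matching there remains an uncontrolled discrepancy in a nonzero finite-dimensional space, which your proposal has no mechanism to remove; in addition, the required vanishing of the pairing of your candidate principal part against the weight-$3/2$ obstruction space is only asserted (``should kill the obstruction''), not proven. The paper sidesteps all of this with one explicit device that your proposal is missing: the Eichler--Zagier weak Jacobi forms $\tilde\phi_{0,1}$ and $\tilde\phi_{-2,1}$ of \cite{EZ}, whose theta decompositions give $\varphi_0\theta_0+\varphi_1\theta_1=12$ and $\psi_0\theta_0+\psi_1\theta_1=0$. The first identity furnishes an exact, rational right inverse, $g_{(\kappa,\nu)}:=\tfrac1{12}\varphi_\kappa f_\nu$, so $T_{L,N}(g)=f$ on the nose and no obstruction theory or duality argument is needed at all.

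The second identity is what makes your ``kernel adjustment'' step actually work, and here too the decisive points are missing. The paper's kernel elements are $\tilde g_{(\kappa,\nu)}=\psi_\kappa\,\widetilde\Theta_\nu\, j^{(s^2-t)/4+1}$ (they lie in the kernel precisely because $\psi_0\theta_0+\psi_1\theta_1=0$), and the crucial feature is that they are \emph{triangular}: for each $s>0$ such an element has nonzero coefficient at exponent $-Ds^2$ but vanishing coefficients at $-Dr^2$ for all $r>s$, so a downward induction clears every $b(-Ds^2)$ with $s\neq 0$. You explicitly flag this solvability question as your ``main obstacle'' and leave it open. Moreover, your claim that the coefficients $b(-Ds^2)$ are ``free'' from the point of view of $T_{L,N}$ is not correct: any kernel element $\tilde g$ satisfies $\tilde g_{(0,0)}\theta_0+\tilde g_{(1,0)}\theta_1=0$, whose constant term says that its coefficients at the exponents $-Ds^2$ sum to zero over $s\in\Z$. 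Hence every kernel adjustment preserves the quantity $\sum_{s\in\Z} b(-Ds^2)$, which by $T_{L,N}(h)=f$ equals the constant term of $f_0$. So $b(0)$ is \emph{forced} rather than adjustable: after clearing all $s\neq 0$ one is left with $b(0)=\mathrm{CT}(f_0)$, and this is exactly where the hypothesis that $f$ has zero constant term enters the proof. Your write-up instead treats the cusp condition as ``subsumed'' by the adjustable coefficients, which inverts the actual logic and hides the one linear relation that cannot be adjusted away.
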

\begin{proof}
Denote by $S$ the lattice $\Z$ equipped with the quadratic form $\q(x):=-x^2$. For this lattice  we have $S'/S\cong \Z/2\Z$. Lemma \ref{lemma: L cap m bot} implies that $L\cong N\oplus S$.
Note that $L'/L\cong S'/S\times N'/N$ and $\rho_L=\rho_S\otimes\rho_N$.
Set
$$\theta_0(\tau,z)=\sum_{n\in\Z}\e(n^2\tau+2nz) ,\quad \theta_1(\tau,z)=\sum_{n\in\frac 12+\Z}\e(n^2\tau+2nz) $$
and
$$\theta_\kappa(\tau)=\theta_\kappa(\tau,0),\;\;\;\kappa=0,1. $$
It follows from  the definition of $T_{L,N}$ that
$$(T_{L,N}(h))_\nu=\sum_{\kappa\in S'/S}h_{(\kappa,\nu)}\theta_\kappa. $$
Let $\tilde{\phi}_{-2,1},\;\tilde{\phi}_{0,1}$ be the weak Jacobi forms defined in the book \cite{EZ} p.108. These functions can be written as \begin{align*}&\tilde{\phi}_{-2,1}(\tau,z)=
\psi_0(\tau)\,\theta_0(\tau,z)+\psi_1(\tau)\,\theta_1(\tau,z),\\ &\tilde{\phi}_{0,1}(\tau,z)=
\varphi_0(\tau)\,\theta_0(\tau,z)+\varphi_1(\tau)\,\theta_1(\tau,z)
 \end{align*}
 where \begin{align}\label{eqn: fourier psi phi ++}&\psi_0=-2 - 12q - 56q^2 - 208q^3+\cdots,\\ &\psi_1=q^{-1/4} + 8q^{3/4} + 39q^{7/4} + 152q^{11/4} +\cdots \notag \\ & \notag\\ &\varphi_0=10 + 108q + 808q^2 + 4016q^3+\cdots ,\notag \\ &\varphi_1= q^{-1/4} - 64q^{3/4} - 513q^{7/4} - 2752q^{11/4} 
+\cdots \notag .\end{align}
The vector-valued functions $(\psi_0,\psi_1)$ and  $(\varphi_0,\varphi_1)$ belong to the spaces $M_{-5/2}^!(\rho_S)$ and $M_{-1/2}^!(\rho_S)$ respectively, and they satisfy
\begin{align}\label{eqn: phi 0 12 ++} \tilde{\phi}_{-2,1}(\tau,0)&=\psi_0(\tau)\,\theta_0(\tau)+\psi_1(\tau)\,\theta_1(\tau)=0,\\
 \tilde{\phi}_{0,1}(\tau,0)&=\varphi_0(\tau)\,\theta_0(\tau)+\varphi_1(\tau)\,\theta_1(\tau)=12.\notag
 \end{align}

First, we construct a function $g\in M_{1/2}^{\,!}(\rho_L)$ that satisfies conditions (i) and (iii). Define
\begin{equation}\label{h++}g_{(\kappa,\nu)}:=\frac{1}{12}\varphi_\kappa f_\nu,\quad(\kappa,\nu)\in S'/S\times N'/N. \end{equation} This function satisfies
\begin{align*}T_{L,N}(g)=&\frac{1}{12}\sum_{\nu\in N'/N} e_\nu\,( g_{(0,\nu)}\theta_0+g_{(1,\nu)}\theta_1)\\ =&\frac{1}{12}\sum_{\nu\in N'/N} e_\nu\, f_\nu(\varphi_0\theta_0+\varphi_1\theta_1)\\
=&f.\end{align*}

Next, we will add a correction term to $g$ and construct a function that satisfies also (ii). Fix an integer $s>0$.
Our next goal is to construct a supplementary function $\tilde{g}(\tau)=\sum_{\lambda_\in\Z/2D\Z}\sum_{d\equiv\lambda^2(\mathrm{mod}\;4D)}\tilde{a}(d)\,\e(d\tau)\in M^{\,!}_{1/2}(\rho_L)$ with the following properties:
\begin{align}\label{eqn: tilde g 1}&\tilde{a}(-Ds^2)\neq 0\mbox{ and }\tilde{a}(-Dr^2)=0\mbox{ for all }r>s,\\ \label{eqn: tilde g 2}&
T_{L,N}(\tilde{g})=0\\ \label{eqn: tilde g 3}&\tilde{g}\mbox{ has rational Fourier coefficients.}\end{align}
To this end we consider the following theta function
   $$\widetilde{\Theta}:=\sum_{\nu\in\Z/D\Z}e_\nu\sum_{a\in\ring+\nu/\sqrt{-D}}(a^2+\overline{a}^2)\,\e(a\overline{a}\tau).
$$    By Theorem 4.1 in \cite{Bo1} his theta function belongs to $S_3(\rho)$. We define \begin{equation}\label{h++}\tilde{g}_{(\kappa,\nu)}:=\psi_\kappa\, \widetilde{\Theta}_{\nu}\,j^{\frac{s^2-t}{4}+1},\quad (\kappa,\nu)\in S'/S\times N'/N, \end{equation}
 where $$t=\begin{cases}0 &\mathrm{if}\; s\equiv0\;\mathrm{mod}\;2,\\1 \;&\mathrm{otherwise},\end{cases} $$ and $j$ is the $j$-invariant.
 First we check that the function $\tilde{g}$ satisfies condition \eqref{eqn: tilde g 1}. For $D\neq3$ we have $$\Theta_0=4q+O(q^2),\quad q=\e(\tau).$$ Hence, from \eqref{h++} we find that for $s$ even
 \begin{align*}\tilde{g}_{(0,0)}=& -8q^{-s^2/4}+O(q^{-s^2/4+1}),\\
  \tilde{g}_{(1,0)}=&4q^{-s^2/4-1/4}+O(q^{-s^2/4+3/4}),\end{align*}
 and for $s$ odd
 \begin{align*}\tilde{g}_{(0,0)}=& -8q^{-s^2/4+1/4}+O(q^{-s^2/4+5/4}),\\
  \tilde{g}_{(1,0)}=&4q^{-s^2/4}+O(q^{-s^2/4+1}).\end{align*} This proves \eqref{eqn: tilde g 1}.
  The function $\tilde{g}$ satisfies
\begin{align*}T_{L,N}(\tilde{g})=&\sum_{\nu\in N'/N} e_\nu\,( \tilde{g}_{(0,\nu)}\theta_0+\tilde{g}_{(1,\nu)}\theta_1)\\ =&\sum_{\nu\in N'/N} e_\nu\, \widetilde{\Theta}_{\ring+\nu}\,j^{\frac{s^2-t}{4}+1}\,(\psi_0\theta_0+\psi_1\theta_1)\\
=&\,0.\end{align*} This proves \eqref{eqn: tilde g 2}. The property \eqref{eqn: tilde g 3} is obvious.

  By subtracting from $g$ a suitable linear combination of functions $\tilde{g}$ for different $s$  we find a function
$$h(\tau)=\sum_{\lambda\in\Z/D\Z}e_\lambda\sum_{d\equiv\lambda^2(\mathrm{mod}\;4D)}b(d)\,
\e\Big(\frac{d}{4D}\tau\Big)\in M^{\,!}_{1/2}(\rho_L)$$ such that
\begin{align}\label{eqn: tilde h 1}&b(-Dr^2)=0\mbox{ for all }r\in\Z\backslash 0,\\ \label{eqn: tilde h 2}&
T_{L,N}(h)=f,\\ \label{eqn: tilde h 3}&h(\tau)\mbox{ has rational Fourier coefficients.}\end{align}
The final step is to show that $b(0)=0$. Identity \eqref{eqn: tilde h 2} implies that
 $$h_{(0,0)}\theta_0+h_{(1,0)}\theta_1=f_0. $$ Hence, the constant terms of these functions are equal. By the assumptions of the theorem
$$\mathrm{CT}(f_0)=0.$$ On the other hand
$$\mathrm{CT} (h_{(0,0)}\theta_0+h_{(1,0)}\theta_1)=\sum_{s\in\Z} b(-Ds^2)=b(0). $$ Thus, the function $h$ satisfies the conditions (i)-(iii) of the theorem. This finishes the proof.
\end{proof}
We observe that the Grassmanian  $\mathrm{Gr}^+(L)$ is isomorphic to the upper half-plane $\h$.
 There is a map $\h\to \mathrm{Gr}^+(L)$ given by \begin{equation} \label{eqn: D  z to v+}z\quad\to\quad v^+(z):=\Re\left(\vcenter{\xymatrix@R=0pt@C=0pt@W=0pt{z^2&z\\ z& 1}}\right)\R+\Im\left(\vcenter{\xymatrix@R=0pt@C=0pt@W=0pt{z^2&z\\ z& 1}}\right)\R \subset L\otimes\R.\end{equation}
 The group $\Gamma_0(D)$ acts on $L'$ and fixes all the elements of $L'/L$.
Denote by $X_0(D)$ the modular curve  $\overline{\Gamma_0(D)\backslash \h}$.

  Suppose that the vector $m\in L'$, the lattice $N$ and the point $\mathfrak{z}_m\in\h$ are defined as in Theorem \ref{thm: L cap m bot}. Let $h$ be the modular form $h\in S^{\,!}_{1/2}(\rho_L)$ satisfying \begin{equation}\label{eqn: TPN(h)=f 1}T_{L,N}(h)=f,\end{equation}
     that was constructed in the previous theorem. It follows from \eqref{eqn: TPN(h)=f 1} and Theorem \ref{th see-saw borcherds} that
    $$\Phi_L(h,\mathfrak{z}_m)=\Phi_N(f).$$ Recall that by definition $$\Phi_N(f)=(f,\Theta_{\Bb})^\mathrm{reg}.$$ Without loss of generality we assume that $h$ has integral negative Fourier coefficients. The infinite product $\Psi(z):=\Psi_L(h,z)$ introduced in Section \ref{sec: theta} defines a meromorphic function on $X_0(D)$. Theorem \ref{thm: borcherds inf pro} in Section \ref{sec: theta} implies
    \begin{equation}\label{eqn: pet=logPsi}
    (f,\Theta_{\Bb})^\mathrm{reg}=\log|\Psi_L(h,\mathfrak{z}_m)|.\end{equation}
    It also follows from Theorem \ref{thm: borcherds inf pro} that the divisor of $\Psi_L$ is supported at Heegner points.

\section{Heights of Heegner points} \label{sec: heegner}
In this section we compute the local height pairing between Heegner divisors. These calculations are carried out  in the celebrated series of papers \cite{GZ}, \cite{GKZ}. For the convenience of the reader we recall the main steps of the computation in what follows.

First, let as recall the definition of Heegner points and the way they can be indexed by the vectors of the lattice $L'$.

For $\ell\in L'$ with $\q(\ell)<0$ denote by $\x_\ell$ the divisor $(\mathfrak{z}_\ell)-(\infty)$ on the modular curve $X_0(D)$. The divisor $\x_\ell$ is defined over the Hilbert class field of $\Q(\sqrt{D\q(\ell)})$.

For any integer $d>0$ such that $-d$ is congruent to a square modulo $4D$, choose a residue $\beta (\mod 2D)$ with $-d\equiv\beta^2 (\mod 4D)$ and consider the set $$L_{d,\beta} = \left\{ \ell=\left(\vcenter{\xymatrix@R=0pt@C=2pt@W=0pt{a/D&b/2D\\ b/2D&c}}\right)\in L'\,\Big|\; \q(\ell)=-\,\dfrac{d}{4D},\; b\equiv\beta(\mod 2D)\right\}$$ on which $\Gamma_0(D)$ acts.
 Define the Heegner divisor $$\y_{d,\beta}=\sum_{\ell\in \Gamma_0(D)\backslash L_{d,\beta}}\x_\ell .$$
The Fricke involution acts on $L'$ by $$\ell\to\frac{1}{D}\left(\vcenter{\xymatrix@R=0pt@C=2pt@W=0pt{0&1\\-D&0}}\right)\ell
 \left(\vcenter{\xymatrix@R=0pt@C=2pt@W=0pt{0&-D\\1&0}}\right)$$ and maps $L_{d,\beta}$ to $L_{d,-\beta}$. Set \begin{equation}\label{eqn: y*}\y^*_d=\y_{d,\beta}+\y_{d,-\beta}.\end{equation} The divisor $\y^*_d$ is defined over  $\Q$ (\cite{GKZ} p. 499.)

 Now we would like to compute the local height pairings between the divisor $\x_\ell$ and a Heegner divisor. The definition of the local height pairing is given in Section \ref{sec: height}. The divisors $\x_\ell$ and $\y^*_d$ have the  point  $\infty$ at their common support. In order to define the height pairing between these divisors we must fix a
uniformizing parameter $\pi$ at this cusp. We let $\pi$ denote the Tate parameter $q$ on the
family of degenerating elliptic curves near $\infty$. This is defined over $\Q$. Over $\C$
we have $q=\e(z)$ on $X_0^*(D)=\Gamma^*_0(D)\backslash\overline{\h}$, where $z\in\h$ with $\Im(z)$ sufficiently large. The following theorem can be deduced from the computations in Section IV.4 in \cite{GKZ}.
 \begin{theorem}\label{thm: height} Let $d_1,\;d_2>0$ be two integers and $\beta_1,\;\beta_2$ be two elements of $\Z/2D\Z$ with $-d_1\equiv\beta_1^2 (\mod 4D)$ and $-d_2\equiv\beta_2^2 (\mod 4D)$. Suppose that $d_1$ is fundamental and $d_2/d_1$ is not a full square. 
 Fix a vector $\ell\in L_{d_1,\beta_1}$.  Let $p$ be a prime with  $\mathrm{gcd}(p,D)=1$. Choose a prime ideal  $\Pp$  lying above $p$ in the Hilbert class field of $\Q(\sqrt{-d_1})$. Then the following formula for the local height holds:\\
 in the case $\big(\frac{p}{d_1}\big)=1$ we have
 \begin{equation}\label{eqn: height +1}\langle \x_\ell, \y^*_{d_2}\rangle_\Pp=0,\end{equation}
 in the case $\big(\frac{p}{d_1}\big)=-1$ we have
 \begin{equation}\label{eqn: height -1}\langle \x_\ell, \y^*_{d_2}\rangle_\Pp=\log(p)\!\!\!\!\!\!\sum_{\substack{ r\in\Z\\ \scriptstyle r\equiv\beta_1\beta_2\;\mathrm{mod}\;2}}\!\!\!\!\!\!\delta_{d_1}(r)\,r_{\mathfrak{n}\overline{\ccc}^2\aaa^2}
 \bigg(\frac{d_1d_2-r^2}{4Dp}\bigg)\,\mathrm{ord}_p\bigg(\frac{d_1d_2-r^2}{4D}\bigg) .\end{equation}
 Here $\ccc=\Z \mathfrak{z}_\ell+\Z$, $\mathfrak{n}=\Z D+\Z\frac{\beta_1+\sqrt{-d_1}}{2}$, $\aaa$ is any ideal in the ideal class $\Aa$ defined by \eqref{eqn: wp A}, and $$\delta_d(r)=\begin{cases}2\quad\mbox{for}\;r\equiv 0\;\mathrm{mod}\;d;\\ 1\quad\mbox{otherwise}.\end{cases}$$
 \end{theorem}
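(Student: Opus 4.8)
The plan is to follow the strategy of Gross--Zagier and Gross--Kohnen--Zagier, computing the local symbol $\langle \x_\ell, \y^*_{d_2}\rangle_\Pp$ as an intersection number on an arithmetic model of $X_0(D)$ and then reading off the answer from the theory of reduction of CM points. First I would invoke the N\'eron theory recalled in Section~\ref{sec: height}. Since $\x_\ell$ and $\y^*_{d_2}$ both contain the cusp $(\infty)$ in their support, I would use the extended local pairing \eqref{eqn: common support} with the fixed Tate parameter $q$ at $\infty$; as this uniformizer is defined over $\Q$, the decomposition \eqref{eqn: global height} into local symbols remains valid, and the contribution at $\Pp$ reduces to $-(A\cdot B)_\Pp\log q_\Pp$ for extensions $A,B$ of the two divisors to a regular model $\mathcal{X}/\ring_\Pp$, with $A$ chosen to meet each fibral component trivially. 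Because $\gcd(p,D)=1$, the curve $X_0(D)$ has good reduction at $p$, so the intersection is a sum of local multiplicities over the points of the special fibre at which the reductions of $\mathfrak{z}_\ell$ and of the points of $\y^*_{d_2}$ collide.

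The next step is the reduction dichotomy. A point of $X_0(D)$ is a cyclic $D$-isogeny $E\to E'$ of elliptic curves; the CM point $\mathfrak{z}_\ell$ corresponds to such a pair with complex multiplication by the order of discriminant $-d_1$ together with the level datum recorded by $\beta_1$, i.e.\ by the ideal $\mathfrak{n}$. The reduction type at $p$ is governed by the splitting of $p$ in $\Q(\sqrt{-d_1})$. When $\big(\frac{p}{d_1}\big)=1$ the prime $p$ splits and $\mathfrak{z}_\ell$ has ordinary reduction; distinct CM points then reduce to distinct ordinary points, and since $d_2/d_1$ is not a square the two CM orders are genuinely different, so the supports of the reductions of $\x_\ell$ and $\y^*_{d_2}$ are disjoint. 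Hence $(A\cdot B)_\Pp=0$, which yields \eqref{eqn: height +1}. When $\big(\frac{p}{d_1}\big)=-1$ the prime $p$ is inert, $\mathfrak{z}_\ell$ has supersingular reduction, and genuine intersections occur; this is where the nonzero formula \eqref{eqn: height -1} is produced.

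For the inert case I would transcribe the supersingular intersection computation of \cite{GZ} and Section~IV.4 of \cite{GKZ}. At a supersingular point $s$ in characteristic $p$ the endomorphism ring is a maximal order $R$ in the quaternion algebra $B_{p,\infty}$ ramified at $p$ and $\infty$, and a CM point reducing to $s$ is an optimal embedding of the appropriate quadratic order into $R$. The local intersection multiplicity of the two CM points at $s$ is then, by Gross's theory of quasi-canonical liftings, a weighted count of elements $b\in R$ effecting the prescribed change of CM structure, the length of the local deformation ring contributing precisely the factor $\mathrm{ord}_p\big((d_1d_2-r^2)/4D\big)$, where $r$ is the ``trace'' given by the bilinear pairing between the two embeddings. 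Summing over all supersingular $s$ in the reduction and over the $\Gamma_0(D)$-orbits with fixed $r$, the quaternionic count converts, via the Deuring lifting correspondence, into a count of integral ideals of $K=\Q(\sqrt{-D})$ of norm $(d_1d_2-r^2)/(4Dp)$ in a definite ideal class; this is the representation number $r_{\mathfrak{n}\overline{\ccc}^2\aaa^2}$. The congruence $r\equiv\beta_1\beta_2\ (\mathrm{mod}\ 2)$ and the weight $\delta_{d_1}(r)$ arise from matching the level-$D$ structure (the ideal $\mathfrak{n}$) and from the orientation and two-to-one bookkeeping inherent in passing from $\y_{d_2,\beta_2}$ to $\y^*_{d_2}$.

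I expect the main obstacle to be pinning down the exact ideal class $\mathfrak{n}\overline{\ccc}^2\aaa^2$. The set of supersingular points in characteristic $p$ is a torsor under a quotient of the class group, and the choice of the prime $\Pp$ selects which supersingular point the reduction lands on; the Frobenius twist recorded by the class $\Aa$, defined through $\Pp^\sigma=\Pp_1$ in \eqref{eqn: wp A}, is exactly what shifts the naive class $\mathfrak{n}\overline{\ccc}^2$ by $\aaa^2$. Making this precise requires combining the Deuring correspondence (sending the CM curve of $\x_\ell$ to the class of $\overline{\ccc}^2$ and the level structure to $\mathfrak{n}$) with the action of complex conjugation and Frobenius on the CM points, so that the dependence on the choice of $\Pp$ enters only through $\Aa$. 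Everything else is a careful but essentially routine transcription of the height computations in \cite{GZ} and \cite{GKZ}, so reconciling their quaternionic optimal-embedding count with the binary-form representation number indexed by $\Aa$ is where the real work lies.
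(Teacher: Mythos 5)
Your proposal takes essentially the same route as the paper's proof: reduce the local symbol to an arithmetic intersection number on a regular model of $X_0(D)$, kill the split case by the ordinary-reduction/commutative-endomorphism argument (the paper phrases this as: the Clifford order $S_{[d_1,2r,d_2]}$ cannot embed into an order of $\Q(\sqrt{-d_1})$), and in the inert case transcribe the supersingular computations of \cite{GZ} (Prop.\ 6.1 and formula (9.3)), \cite{GKZ} Section IV.4, and the embedding count of Lemma 3.5 of \cite{GZsingmoduli} to turn the quaternionic count into $\delta_{d_1}(r)\,r_{\mathfrak{n}\overline{\ccc}^2\aaa^2}$, with the dependence on $\Pp$ entering through $\Aa$ exactly as you predict. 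Two slips worth correcting, though neither changes the approach: the relevant ring $R=\mathrm{Hom}_{W/\pi}(\underline{\x_\ell})$ is an Eichler order of level $D$ in $B_{p,\infty}$ (not a maximal order), reflecting the cyclic $D$-isogeny datum, and the ideals being counted are integral ideals of $\Q(\sqrt{-d_1})$ (which equals $\Q(\sqrt{-D})$ only in the application where $d_1=D$).
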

 \begin{proof}

 The curve $X_0(D)$ may be described over $\Q$ as the compactification of the space of moduli of elliptic curves with a cyclic subgroup of order $D$ \cite{GZ}. Over a field $k$ of characteristic zero, the points $\y$ of $X_0(D)$ correspond to diagrams
 $$\psi:F\to F' ,$$ where $F$ and $F'$ are (generalized) elliptic curves over $k$ and $\psi$ is an isogeny over $k$ whose kernel is isomorphic to $\Z/D\Z$ over an algebraic closure $\overline{k}$.

 The point $\mathfrak{z}_\ell\in\h$ defines the point $\x\in X_0(D)$. Then $\x=(\phi:E\to E')$ and over $\C$ this diagram is isomorphic to $$\xymatrix{\C/\ccc\ar[r]^{\mathrm{id}_\C}&\C/\ccc\mathfrak{n} }.$$

 Following the calculations in \cite{GZ} we reduce the computation of local heights to a problem in arithmetic intersection theory. Let us set up some notations.  Denote by $v$ the place of $H_{d_1}$, the Hilbert class field of $\Q(\sqrt{-d_1})$, corresponding to prime ideal $\Pp$. Denote by $\Lambda_v$ the ring of integers in the completion $H_{d_1,v}$ and let $\pi$ be an uniformizing parameter in $\Lambda_v$. Let $W$ be the completion of the maximal unramified extension extension of $\Lambda_v$. Let $\underline{X}$ be a regular model  for $X$ over $\Lambda_v$ and $\underline{\x}$, $\underline{\y}$ be the sections of $\underline{X}\otimes \Lambda_v$ corresponding to the points $\x$ and $\y$. A model that has a modular interpretation is described in \cite{GZ} Section III.3). The general theory of local height pairing \cite{Gross} implies
 $$\langle \x,\y \rangle_v=-(\underline{\x}\cdot\underline{\y})\log{p}. $$
 The intersection product is unchanged if we extend scalars to $W$.
 By Proposition 6.1 in \cite{GZ}
 $$ (\underline{\x}\cdot\underline{\y})_W=\frac 12\sum_{n\geq1}\mathrm{Card}\mathrm{Hom}_{W/\pi^n}(\underline{\x},\underline{\y})_{\mathrm{deg}1}.$$

 Denote by $R$ the ring $\mathrm{Hom}_{W/\pi}(\underline{\x_\ell})$.  On p. 550 of \cite{GKZ} the following formula for the intersection number is obtained  
 \begin{equation}\label{eqn: height W} (\underline{\x_\ell}\cdot\underline{\y_{d_2}^*})_W=\frac14\!\!\!\!\!\!\sum_{\xymatrix@R=0pt{\scriptstyle r^2<d_1d_2\\ \scriptstyle r\equiv\beta_1\beta_2(\mathrm{mod}\;2D)}}\!\!\!\!\!\!\mathrm{Card}\big\{S_{[d_1,2r,d_2]}\to R \mod R^\times \big\}\,\mathrm{ord}_p\bigg(\frac{d_1d_2-r^2}{4D}\bigg),\end{equation}
where  $S_{[d_1,2r,d_2]}$ is the Clifford order
$$ S_{[d_1,2r,d_2]}=\Z+\Z\frac{1+e_1}{2}+\Z\frac{1+e_2}{2}+\Z \frac{(1+e_1)(1+e_2)}{4}, $$
$$e_1^2=-d_1,\quad e_2^2=-d_2,\quad e_1e_2+e_2e_1=2r.$$

In the case $\big(\frac{p}{d_1}\big)=1$ the ring $R$ is isomorphic to an order in $\ring_{d_1}$. Since $d_1/d_2$ is not a full square the ring $R$ can not contain  the Clifford order $S_{[d_1,2r,d_2]}$. Hence, $(\underline{\x_\ell}\cdot\underline{\y_{d_2}^*})_W=0$ and this proves \eqref{eqn: height +1}.

Now we consider the case $\big(\frac{p}{d_1}\big)=-1$. Formula (9.3) in \cite{GZ} gives us a convenient description of the ring $R$. Namely, for $a,b\in\Q(\sqrt{-d_1})$ denote $$[a,b]=\left(\vcenter{\xymatrix@R=0pt@C=0pt@W=0pt{a&b\\p\overline{b}& \overline{a}}}\right)$$ and consider the quaternion algebra over $\Q$
 $$B=\Big\{[a,b]\,\Big|\,a,b\in\Q(\sqrt{-d_1})\Big\} .$$
 Then $R$ is an Eichler order of index $D$ in this quaternion algebra and it is given by
 $$R=\Big\{[a,b]\,\Big|\,a\in\mathfrak{d}^{-1},\;b\in\mathfrak{d}^{-1}\mathfrak{n}\overline{\aaa}\overline{\ccc}\aaa^{-1}\ccc^{-1}, a\equiv b\mod\ring_{d_1}\Big\}, $$ where $\mathfrak{d}$ is the different of $\Q(\sqrt{-d_1})$.

 By the same computations as in Lemma 3.5 of \cite{GZsingmoduli} we find that the number of embeddings of $S_{[d_1,2r,d_2]}$ into $R$, normalized so that the image of $e_1$ is $[\sqrt{-d_1},0]$, is equal to  $$\delta_{d_1}(r)\,r_{\mathfrak{n}\overline{\ccc}^2\aaa^2}\bigg(\frac{d_1d_2-r^2}{4Dp}\bigg)\,
\mathrm{ord}_p\bigg(\frac{d_1d_2-r^2}{4D}\bigg).$$This finishes the proof of the theorem.
\end{proof}

\section{Proof of Theorem 1.}\label{sect: proof of prime my}

\noindent\emph{Proof of Theorem \ref{thm: prime my}.}
  Since the discriminant $-D$ is prime, the class number of $K$ is odd and there exists an ideal class $\Cc$ such that $\Bb=\overline{\Cc}^2$ in the ideal class group. The class $\Cc$ contains an ideal of the form  \begin{equation}\label{eqn: Cc z}\ccc=\mathfrak{z}\Z+\Z,\end{equation} where $\mathfrak{z}$ is a CM point of discriminant $-D$. Property \eqref{eqn: Cc z} is preserved when we act on $\mathfrak{z}$ by elements of $\mathrm{SL}_2(\Z)$. As we have explained in the proof of  Theorem \ref{thm: L cap m bot}, we may assume  that $\mathfrak{z}$ satisfies the quadratic equation
 $$a\mathfrak{z}^2+b\mathfrak{z}+c=0 $$
 for $a\in D\Z,b\in D\Z,c\in\Z$ and $b^2-4ac=-D$. The matrix $$m=\frac1D\left(\vcenter{\xymatrix@R=0pt@C=2pt@W=0pt{c&-b/2\\-b/2&a}}\right)$$ belongs to the lattice $L'$ and has the norm $-1/4$. Lemma \ref{lemma: L cap m bot} implies that the lattice $N:= L \cap m^ \bot$ corresponds to the fractional ideal $\ccc^2$ as explained in Section \ref{sec: theta} and moreover, the lattice $L$ splits as  $L= N\oplus 2m\Z$.

 Next, by Theorem \ref{thm: trick pro} we find a weak cusp form $h\in S^{\,!}_{1/2}(\rho_L)$  satisfying \begin{equation}\label{eqn: TPN(h)=f}T_{L,N}(h)=f,\end{equation}
    where $T_{L,N}$ is defined as in Theorem \ref{th see-saw borcherds}.
Function $h$ has the Fourier expansion of the form
    $$h(\tau)=\sum_{\beta\in\Z/2D\Z}e_\beta\sum_{d\equiv \beta^2 (\mod\; 4D)}b(d)\,\e\Big(\frac{d}{4D}\tau\Big). $$
It follows from \eqref{eqn: TPN(h)=f} and Theorem \ref{th see-saw borcherds} that
    $$\Phi_N(f)=\Phi_L(h,\mathfrak{z}).$$
    From Theorem \ref{thm: borcherds inf pro} in Section \ref{sec: theta} we know that
    \begin{equation}\label{eqn: Phi=logPsi}
    \Phi_L(h,\mathfrak{z})=\log|\Psi_L(h,\mathfrak{z})|,\end{equation}
    where $\Psi(z)=\Psi_L(h,z)$ is a meromorphic function. Theorem \ref{thm: borcherds inf pro} also implies that
    \begin{equation}\label{eqn: divPsi}
    \mathrm{div}(\Psi)=\sum_{d=0}^\infty b(-d)\,\y^*_d,\end{equation}
    where $\y_d^*$ is the Heegner divisor defined in \eqref{eqn: y*}.

     Set $\x=(\mathfrak{z})-(\infty)$. The condition (ii) of Theorem \ref{thm: trick pro} implies that the function $\Phi_L(h,\cdot)$ is real analytic at point $\mathfrak{z}$. Thus, the only point in the common support of $\x$ and $\mathrm{div}(\Psi)$ is $\infty$. Recall, that we have fixed the uniformizing parameter $\pi$ at this cusp to be the Tate parameter $q$ on the
family of degenerating elliptic curves near $\infty$.

 Recall that the divisors $\x$ and $\mathrm{div}(\Psi)$ are defined over $H$. The axioms of local height (listed in Section \ref{sec: height}) together with the refined definition \eqref{eqn: common support} imply that for each prime $\Pp$ of $H$
    \begin{equation}\label{eqn: height div}
    \mathrm{ord}_\Pp\big(\Psi(\mathfrak{z})\big)\log p-\mathrm{ord}_\Pp\big(\Psi[\infty]\big)\log p=\big\langle \x, \sum_{d=1}^\infty b(-d)\,\y^*_d\big\rangle_\Pp.\end{equation} From the infinite product of Theorem 13.3 in \cite{Bo1} we find that $\Psi[\infty]=1$ for the choice of the uniformizing  parameter at $\infty$ as above. Theorem~\ref{thm: trick pro} part~(ii) implies that $d/D$ is not a full square provided $b(-d)\neq0$. Thus,
    by Theorem~\ref{thm: height} for each prime $\Pp$ of $H$ lying above a rational prime $p$ with $\big(\frac pD\big)\neq 0$  we obtain
    $$ \langle \x, \y^*_d\rangle_\Pp=0$$ in the case $\big(\frac pD\big)=1$, and
    \begin{equation}\label{eqn: height}\langle \x, \y^*_d\rangle_\Pp=\log(p)\!\!\!\!\!\!\sum_{\substack{ n\in\Z\\ n\equiv d(\mathrm{mod}\;2)}}\!\!\!\!\!\!r_{\overline{\Cc}^2\Aa^2}\bigg(\frac{d-Dn^2}{4p}\bigg)\,
 \mathrm{ord}_p\bigg(\frac{d-Dn^2}{4}\bigg) \end{equation}
 in the case $\big(\frac pD\big)=-1$.
  We observe that the sum $$\sum_{d=0}^\infty b(-d) \!\!\!\!\!\!\sum_{\substack{ n\in\Z\\ n\equiv d(\mathrm{mod}\;2)}}\!\!\!\!\!\!r_{\overline{\Cc}^2\Aa^2}\bigg(\frac{d-Dn^2}{4p}\bigg)\,
 \mathrm{ord}_p\bigg(\frac{d-Dn^2}{4}\bigg)$$ is equal to the constant term with respect to $\e(\tau)$ of the following series
 $$ \sum_{\nu\in\Z/D\Z}\bigg(\Big( h_{(0,\nu)}\theta_0+h_{(1,\nu)}\theta_1\Big) \sum_{t\equiv\nu\;\mathrm{mod}\;D}r_{\Bb\Aa^2}\Big(\frac t p\Big)\,\mathrm{ord}_p(t)\,\e\Big(\frac t D\tau\Big)\;\bigg). $$
    The equation \eqref{eqn: TPN(h)=f} implies
    \begin{equation}\label{eqn:f=h theta} f_\nu=h_{(0,\nu)}\theta_0+h_{(1,\nu)}\theta_1,\quad \nu\in\Z/D\Z.\end{equation}
    Hence, combining the equations \eqref{eqn: height} and \eqref{eqn:f=h theta} we arrive at
    $$\Big\langle \x, \sum_{d=0}^\infty b(-d)\,\y^*_d\Big\rangle_\Pp=\log{p}\sum_{\nu\in\Z/D\Z}\sum_{t=0}^\infty c_\nu(-t)\,r_{\Bb\Aa^2}\Big(\frac{t}{p}\Big)\,\mathrm{ord}_p(t). $$
Finally, the equations \eqref{eqn: Phi=logPsi} and \eqref{eqn: height div} imply
$$\mathrm{ord}_\Pp(\alpha)=\mathrm{ord}_\Pp(\Psi_L(h,\mathfrak{z}))=\frac{1}{\log {p}}\,\Big\langle \x, \sum_{d=0}^\infty b(-d)\,\y^*_d\Big\rangle_\Pp= $$
$$=\sum_{\nu\in\Z/D\Z}\sum_{t=0}^\infty c_\nu(-t)\,r_{\Bb\Aa^2}\Big(\frac{t}{p}\Big)\,\mathrm{ord}_p(t).$$
 This finishes the proof of Theorem \ref{thm: prime my}. $\Box$
\section{Numerical example}
In this section we illustrate the Theorem \ref{thm: prime my} with the following numerical example coming from the evaluation of higher Green's functions at CM-points. In particular, we prove identity \eqref{eqn: example} stated in the introduction.

Recall that the higher Green's functions are real-valued functions of two variables on the upper half-plane which are bi-invariant under the action of  $SL_2(\Z)$, have a logarithmic
singularity along the diagonal and satisfy $\Delta f = k(1 - k)f$, where $k$ is a positive integer. The precise definition of these functions can be found in \cite{Via CM Green}. We denote higher Green's functions by $G_k(z_1,z_2)$. In our recent work \cite{Via CM Green} we have related the CM-values of higher Green's functions and regularized products of weight one modular forms. Let us explain this relation on the following concrete example.

We consider the pair of CM points $$\mathfrak{z}_1=\frac{1+\sqrt{-23}}{4},\quad\mathfrak{z}_2=\frac{-1+\sqrt{-23}}{4} $$ lying in the imaginary quadratic field $K=\Q(\sqrt{-23})$. In the last section of \cite{GZ} B. Gross and D. Zagier have conjectured that higher Green's functions have "algebraic" CM values. In particular, Conjecture (4.4) of \cite{GZ} predicts that for $k=2,3,4,5$ and $7$
\begin{equation}\label{eqn: CM Green} G_k(\mathfrak{z}_1,\mathfrak{z}_2)=23^{1-k}\,\log|\alpha_k|, \end{equation}
where $\alpha_k$ are some algebraic numbers lying at the Hilbert class field of $K$.

The connection between the CM values \eqref{eqn: CM Green} and regularized Petersson products is as follows. Note that the space $S_{2k}$ of cusp forms of weight $2k$ on $\slz$ is zero precisely when $k=1,2,3,4,5$ and $7$. For these values of $k$ the Serre duality implies that there exists the unique modular form $g_k$ in the space $M^!_{2-2k}$ of weakly holomorphic modular forms of weight $2-2k$ on  $\slz$ with the Fourier expansion $g_k=q^{-1}+O(1)$. Denote by $\Bb$ and $\Cc$, respectively, the ideal classes of $K$ containing the fractional ideals $\Z\mathfrak{z}_1+\Z$ and  $\Z\mathfrak{z}_2+\Z$, respectively. Theorem 4 in \cite{Via CM Green} implies that
$$G_k(\mathfrak{z}_1,\mathfrak{z}_2)=\big([g_k,\Theta_{\Bb\overline{\Cc}}]_{k-1},\Theta_{\Bb\Cc}\big)^{\mathrm{reg}},$$
where $[\cdot,\cdot]_{k-1}$ denotes the $(k-1)$-st Rankin-Cohen brackets. Thus, Theorem~1 implies the Conjecture (4.4) formulated in \cite{GZ} and, moreover, gives the factorization formula for the CM-values of higher Greens functions.

  At the rest of this section we will compute the regularized integral \eqref{eqn: example}, which is equal to the CM-value $G_2(\mathfrak{z}_1,\mathfrak{z}_2)$. The function $f$ defined by \eqref{eqn: f example} has the Fourier expansion of the form $$f(\tau)=\sum_{\nu\in \Z/D\Z}e_\nu\,\sum_{\substack{t\equiv \nu^2(\mathrm{mod}\;D)\\t\gg-\infty}} c(t)\,\e\Big(\frac t D\tau\Big). $$
The negative Fourier coefficients of $f$ are given in the following table:
\begin{center}
\begin{tabular}{|c|c|c|c|c|c|c|c|c|c|c|}\hline\rule{0pt}{15pt} $t$&$5$&$7$&$11$&$14$&$15$&$17$&$19$&$20$&$21$&$23$  \\[3pt] \hline\rule{0pt}{15pt}
$c(-t)$&$26$&$18$&$2$&$-5$&$-7$&$-11$&$-15$&$-17$&$-19$&$-23$\\[3pt]
\hline
\end{tabular}
\end{center}

Denote by $N$ an even lattice $(\ring_{K},\q)$, where the quadratic form $\q$ is defined as $\q(x)=\mathrm{N}_{K/\Q}(x)$ for $x\in\ring_K$.

Consider the lattice \begin{equation}\label{eqn: lattice L23}L=\bigg\{\left(\vcenter{\xymatrix@R=0pt@C=2pt@W=0pt{A/23&B\\B&C}}\right) \Big|A,B,C\in\Z \bigg\}\end{equation} equipped with the quadratic form $\q(\ell):=-23\det(\ell)$. Choose the vector $$m= \left(\vcenter{\xymatrix@R=0pt@C=2pt@W=0pt{6/23&1/2\\1/2&1}}\right). $$
The vector $m$ has the norm $\q(m)=-1/4$ and by Lemma \ref{lemma: L cap m bot} its orthogonal complement  $L\cap m^\bot$ is isomorphic to $N$. Moreover, $L$ splits into a direct sum $L\cong 2m\Z\oplus N.$ From \eqref{eqn: frakz} we find $$ \mathfrak{z}_m=\frac{23+\sqrt{-23}}{46}.$$
Note that the fractional ideal $$\ccc:=\Z+ \mathfrak{z}_m\Z=(\sqrt{-23})^{-1}$$ is principal.

Recall that the group $\Gamma_0^*(23)$ acts on $L$ by isometries and the map \eqref{eqn: D  z to v+} gives an isomorphism between $\Gamma_0^*(23)\backslash\mathrm{Gr}^+(L)$ and $\Gamma_0^*(23)\backslash\h$. We denote by $X^*_0(23)$ the modular curve $\overline{\Gamma_0^*(23)\backslash\mathrm{Gr}^+(L)}$.

Let $\phi, \psi, \widetilde{\Theta}$ and $j$ be as in the proof of Theorem \ref{thm: trick pro}. By Theorem \ref{thm: trick pro} we find that the vector valued function $ h$ given componentwise by
$$h_{(\nu,\kappa)}= \varphi_\kappa f_\nu+\frac{10}{92}\psi_\kappa\widetilde{\Theta}_{\nu}\,j^{2}-
\frac{7416}{46}\psi_\kappa\widetilde{\Theta}_{\nu}\,j,\;(\kappa,\nu)\in S'/S\times N'/N,$$
belongs to $S^!_{1/2}(\rho_L)$ and satisfies conditions (i)-(iii). Moreover, the function $12\cdot 23^2\cdot h$ has integral Fourier coefficients. By Theorem \ref{th see-saw borcherds} these conditions imply that
$$(f,\Theta_{\mathcal{O}})^{\mathrm{reg}}=\Phi_L(h,v^+(\mathfrak{z}_m)). $$
Function $h$ has the Fourier expansion $$h(\tau)=\frac{1}{12\cdot 23^2} \sum_{\lambda\in\Z/46\Z}e_\lambda\sum_{d\equiv\lambda^2(\mathrm{mod}\;92)}b(d)\,
\e\Big(\frac{d}{92}\tau\Big) .$$ The negative Fourier coefficients of $12\cdot 23^2\cdot h$ are given in the following table
\begin{center}
\begin{tabular}{cccc}
\begin{tabular}{|c|c|}\hline\rule{0pt}{15pt}
$d$&$b(-d)$\\ \hline
$7$&$-3126678$\\
$11$&$1455$\\
$15$&$2497$\\
$19$&$-783263$\\
$20$&$-884$\\
$28$&$-1228$\\
$40$&$-790$\\
$43$&$884$\\
$44$&$-68$\\
$51$&$990$\\
$56$&$-792$\\
\hline
\end{tabular}
&
\begin{tabular}{|c|c|}\hline\rule{0pt}{15pt}
$d$&$b(-d)$\\ \hline
$60$&$616$\\
$63$&$431$\\
$67$&$68$\\
$68$&$968$\\
$76$&$-352$\\
$79$&$426$\\
$80$&$-630$\\$
83$&$-462$\\
$84$&$-630$\\
$88$&$332$\\
$91$&$-726$\\
\hline
\end{tabular}
&
\begin{tabular}{|c|c|}\hline\rule{0pt}{15pt}
$d$&$b(-d)$\\ \hline
$99$&$36$\\
$103$&$111$\\
$107$&$87$\\
$111$&$-156$\\
$112$&$-130$\\
$115$&$-276$\\
$120$&$-160$\\
$135$&$65$\\
$136$&$-10$\\
$143$&$80$\\
$148$&$-90$\\
\hline
\end{tabular}
&
\begin{tabular}{|c|c|}\hline\rule{0pt}{15pt}
$d$&$b(-d)$\\ \hline
$152$&$70$\\
$159$&$5$\\
$160$&$110$\\
$168$&$-40$\\
$171$&$45$\\
$175$&$-35$\\
$180$&$-10$\\
$183$&$-55$\\
$191$&$20$\\
$203$&$5$\\
&\\
\hline
\end{tabular}
\end{tabular}
\end{center}

 Since the function $12\cdot 23^2\cdot h$ has integral negative Fourier coefficients, the infinite product $\Psi(z):=\Psi_L(12\cdot 23^2\cdot h,z)$ introduced in Section \ref{sec: theta} defines a meromorphic function on $X_0^*(23)$ with the only zeroes and poles at Heegner points. Theorem \ref{thm: borcherds inf pro} in Section \ref{sec: theta} implies
    \begin{equation}\label{eqn: pet=logPsi}
    (f,\Theta_{\mathcal{O}})^\mathrm{reg}=\frac{1}{12\cdot23^2}\log|\Psi_L(12\cdot 23^2\cdot h,\mathfrak{z}_m)|.\end{equation}

The curve $X_0^*(23)$ has genus $0$ and only one cusp. Let $j^*_{23}(z)$ be the Hauptmodul for $\Gamma_0^*(23)$ having the Fourier expansion $j^*_{23}(z)=q^{-1}+O(q)$, where $q=\e(z)$. This function is given explicitly by
\begin{align*} j^*_{23}(z)&=\frac{1}{\eta(z)\eta(23z)}\sum_{a,b\in\Z}\e((a^2+ab+6b^2)z)-3\\
&= q^{-1} + 4q + 7q^2 + 13q^3 + 19q^4 + 33q^5 + 47q^6 + 74q^7 + \cdots. \end{align*}

 For any integer $d>0$ such that $-d$ is congruent to a square modulo $92$, choose an integer $\beta (\mod 46)$ with $-d\equiv\beta^2 (\mod 92)$ and consider the set $$L_{d,\beta} = \left\{ \ell=\left(\vcenter{\xymatrix@R=0pt@C=2pt@W=0pt{a/23&b/46\\b/46&c}}\right)\in L'\,\Big|\; \q(\ell)=-d/92, b\equiv\beta(\mod 46)\right\}$$ on which $\Gamma_0(23)$ acts. The Fricke involution acts on $L'$ by $$x\to\frac{1}{23}\left(\vcenter{\xymatrix@R=0pt@C=2pt@W=0pt{0&1\\-23&0}}\right)p
 \left(\vcenter{\xymatrix@R=0pt@C=2pt@W=0pt{0&-23\\1&0}}\right)$$ and maps $L_{d,\beta}$ to $L_{d,-\beta}$.

We define a polynomial $\mathcal{H}_{d,23}(X)$ by
$$\mathcal{H}_{d,23}(X)= \prod_{\ell\in L_{d,\beta}}(X-j^*_{23}(\mathfrak{z}_\ell))^{1/|\mathrm{Stab}(\ell)|}.$$
It follows from Theorem B3 part 2 that
$$\Psi(z,12\cdot 23^2\cdot h)=\prod_{d\ll\infty} \mathcal{H}_{d,23}(j^*_{23}(z))^{b(-d)}, $$
 and the numbers $b(-d)$ are given in the above table.

Recall that $K$ denotes the imaginary quadratic field $\Q(\sqrt{-23})$ and $H$ denotes its Hilbert class field. Let the algebraic numbers $\rt,\pp_l,\qq_l\in H$ be defined as in \eqref{eqn: p_l p_l^2}.
    The value of the Hauptmodul $j^*_{23}$ at the point $\mathfrak{z}_m=\frac{23+\sqrt{-23}}{46}$ is equal to $-\rt-2.$ The values of $\mathcal{H}_{d,23}\Big(j^*\big(\frac{23+\sqrt{-23}}{46}\big)\Big)$ for small values of $d$ are given in the following table.

\begin{center}
   \begin{tabular}{|l|l|l|}\hline\rule{0pt}{15pt}
$d$   &  $\mathcal{H}_{d,23}(X)$&$\mathcal{H}_{d,23}(-2-\rt)$ \\[2pt]  \hline\rule{0pt}{15pt}
$7$  &  $(X + 2)^2$&$\rt^{2}\,$ \\[2pt]
$11$ &  $(X + 1)^2$&$\rt^{6}\,$ \\[2pt]
$15$ &  $(X^2 + 3X + 3)^2$&$\rt^{10}\,$ \\[2pt]
$19$ &  $(X + 3)^2$&$\rt^{-8}\,$ \\[2pt]
$20$ &  $(X^2 + 4X + 5)^2$&$\pp_5^{2}\,\rt^{10}\,$ \\[2pt]
$28$ &  $X^2(X + 2)^2$&$\pp_7^{2}\,\rt^{2}\,$ \\[2pt]
$40$ &  $(X^2 + 2X + 3)^2$&$\pp_{25}^{2}\,\rt^{6}\,$ \\[2pt]
$43$ &  $(X - 1)^2$&$\pp_5^{4}\,\rt^{16}\,$ \\[2pt]
$44$ &  $(X + 1)^2(X^3 + 7X^2 + 17X + 13)^2$&$\pp_{11}^{2}\,\rt^{10}\,$ \\[2pt]
$51$ &  $(X^2 + 4X + 7)^2$&$\pp_7^{4}\,\rt^{-6}\,$ \\[2pt]
$56$ &  $(X^4 + 4X^3 - 16X - 17)^2$&$\pp_{49}^{2}\,\rt^{12}\,$ \\[2pt]
$60$ &  $(X^2 + 3X + 3)^2(X^2 + 7X + 13)^2$&$\pp_{25}^{2}\,$ \\[2pt]
$63$ &  $(X + 2)^2(X^4 + 5X^3 + 12X^2 + 20X + 19)^2$&$\pp_{25}^{4}\,\rt^{8}\,$ \\[2pt]
$67$ &  $(X - 3)^2$&$\pp_{11}^{4}\,\rt^{6}\,$ \\[2pt]
$68$ &  $(X^4 + 10X^3 + 34X^2 + 46X + 25)^2$&$\pp_{17}^{2}\,\rt^{-6}\,$ \\[2pt]
$76$ &  $(X + 3)^2(X^3 - X^2 - 9X - 9)^2$&$\pp_{19}^{2}\,\rt^{4}\,$ \\[2pt]
$79$ &  $(X^5 + 10X^4 + 43X^3 + 90X^2 + 90X + 27)^2$&$\pp_{49}^{4}\,\rt^{16}\,$ \\[2pt]
$80$ &  $(X^2 + 4X + 5)^2(X^4 + 6X^3 + 20X^2 + 30X + 17)^2$&$\pp_5^{2}\,\pp_{25}^{2}\,\rt^{28}\,$
\\[2pt]
$83$ &  $(X^3 - X^2 - 13X - 19)^2$&$\pp_{25}^{4}\,\rt^{6}\,$ \\[2pt]
$84$ &  $(X^4 + 2X^3 + 6X^2 + 14X + 13)^2$&$\pp_{49}^{2}\,\rt^{26}\,$ \\[2pt]
$91$ &  $(X^2 - 4X - 9)^2$&$\pp_{17}^{4}\,\rt^{-6}\,$ \\[2pt]
$99$ &  $(X + 1)^2(X^2 + 8X + 19)^2$&$\pp_{19}^{4}\,\rt^{-8}\,$ \\[2pt]
$103$ & $(X^5 + 4X^4 + 7X^3 + 33X^2 + 99X + 81)^2$&$\pp_5^{4}\,\pp_{25}^{4}\,\rt^{18}\,$ \\[2pt]
$107$ & $(X^3 + 5X^2 + 19X + 31)^2$&$\pp_{49}^{4}\,\rt^{8}\,$ \\[2pt]
$115$ & $(X + 5)^2$&$\qq_{23}^2$ \\[2pt] \hline
\end{tabular}
\end{center}
Using this values of polynomials $\mathcal{H}_{d,23}(X)$ together with values for those $d$ which are not included in the table, we finally we arrive at
$$\frac{23}{12}\log\Big|\Psi_P\Big(\frac{23+\sqrt{-23}}{46},h\Big)\Big|=\log\big|\pp_5^{\,18}\,\pp_{25}^{-42}\,
    \pp_7^{36}\,\pp_{49}^{-48}\,\pp_{11}^{4}\,\pp_{17}^{-22}\,\pp_{19}^{-30}\,\qq_{23}^{-23}
    \,\rt^{-9\cdot23}\big|.$$
This proves the result \eqref{eqn: green 23 numerical 234++}  obtained by numerical integration.

{\footnotesize
\noindent
University of Cologne, Gyrhofstrasse 8b, 50823 Cologne, Germany\\
{\it Email address: mviazovs@math.uni-koeln.de}}

\end{document}